\date{\today}
\newtheorem*{theorem*}{Theorem}
\newtheorem{thm}{Theorem}[section]
\newtheorem{cor}[thm]{\bf{Corollary}}
\newtheorem{lem}[thm]{Lemma}
\newtheorem{prop}[thm]{Proposition}
\theoremstyle{definition}
\newtheorem{defn}[thm]{Definitions}
\theoremstyle{remark}
\newtheorem{rem}[thm]{\bf{Remark}}
\newtheorem{exmp}{\bf{Example}}[section]
\numberwithin{equation}{section}
\newcommand{\beas}{\begin{eqnarray*}}
	\newcommand{\eeas}{\end{eqnarray*}}
\newcommand{\bes} {\begin{equation*}}
	\newcommand{\ees} {\end{equation*}}
\newcommand{\be} {\begin{equation}}
	\newcommand{\ee} {\end{equation}}
\newcommand{\bea} {\begin{eqnarray}}
	\newcommand{\eea} {\end{eqnarray}}
\newcommand{\ra} {\rightarrow}
\newcommand{\R}{\mathbb R}
\newcommand{\Z}{\mathbb Z}
\newcommand{\la}{\lambda}
\newcommand{\C}{{\mathbb C}}
\newcommand{\N}{{\mathbb N}}
\newcommand{\Chi}{\mbox{\large$\chi$} }
\newcommand{\vp}{\varphi}
\renewcommand{\Re}{\operatorname{Re}}
\renewcommand{\Im}{\operatorname{Im}}
\newcommand{\blue}[1]{\textcolor{blue}{#1}}
\title[Weighted estimates on Harmonic $NA$ group]
{Weighted estimates for Hardy-Littlewood maximal functions on Harmonic $NA $ groups }
\author[Ganguly, Rana, and Sarkar]{ Pritam Ganguly, Tapendu Rana, and Jayanta Sarkar}
\address{Pritam Ganguly  \endgraf Institut f\"ur Mathematik,	\endgraf Universit\"at Paderborn, 	\endgraf 33098 Paderborn, Germany.} \email{pritam1995.pg@gmail.com,  pritamg@math.upb.de}
\address{Tapendu Rana  \endgraf Department of Mathematics: Analysis, Logic and Discrete Mathematics,	\endgraf Ghent University, 	\endgraf Krijgslaan 281, Building S8, B 9000 Ghent, Belgium.} \email{tapendurana@gmail.com, tapendu.rana@ugent.be}
\address{Jayanta Sarkar \endgraf Department of Mathematics and Statistics, \endgraf Indian Institute of Science Education and Research Kolkata, \endgraf Mohanpur-741246, Nadia, West Bengal, India.} 
\email{jayantasarkarmath@gmail.com, jayantasarkar@iiserkol.ac.in}
\date{}
\keywords{Hardy-Littlewood maximal operators, Harmonic $NA$ groups, Fefferman-Stein inequalities, weighted estimates, spherical functions, exponential volume growth}
\subjclass[2010]{Primary:  43A80, Secondary: 43A90, 43A15, 42B25}
\begin{document}
	\maketitle

	\begin{abstract} 
   Our aim in this article is to study the weighted boundedness of the centered Hardy-Littlewood maximal operator in Harmonic $NA$ groups. Following Ombrosi et al. \cite{ORR}, we define a suitable notion of $A_p$ weights, and for such weights, we prove the weighted $L^p$-boundedness of the maximal operator. Furthermore, as an endpoint case, we prove a variant of the Fefferman-Stein inequality, from which vector-valued maximal inequality has been established. We also provide various examples of weights to substantiate many aspects of our results. In particular, we have shown certain spherical functions of the Harmonic $NA$ group constitute examples of $A_p$ weights. The purely exponential volume growth property of the Harmonic $NA$ group has played a crucial role in our proofs.     
	
 \end{abstract}
	
	\section{Introduction and main results}

The study of weighted inequalities in harmonic analysis can be traced back to the influential works by Fefferman-Stein \cite{FS}, and Muckenhoupt \cite{Muc} in 1971 and 1972, respectively. In these two seminal works, they provided a characterization of weights that allow the boundedness of the Hardy-Littlewood maximal operator on weighted $L^p$ spaces on $\R^d$. This characterization was achieved by introducing the so-called $ A_p$ class of weights. The significance of the $A_p$ class was soon further enhanced by the observation that many other operators exhibited similar weighted estimates. Consequently, the study of weighted estimates in $\R^d$, as well as in several other spaces, emerged as a vibrant and thriving research area in modern harmonic analysis, leading to numerous activities in the spaces of homogeneous and non-homogeneous types. This article is devoted to advancing this line of research in the context of certain Riemannian manifolds with non-positive sectional curvatures. We begin by providing some background that will make the discussion mathematically more rigorous.

  Let $(X, d, \mu)$ be a metric measure space such that $0<\mu(B(x,r))<\infty$, for any ball $B(x,r)$ centered at $x\in X$ and of radius $r>0$.  We denote by $L^p(\mu)$, the usual Lebesgue spaces with respect to the measure $\mu$ for $1\leq p\leq \infty.$ Throughout this article, by a weight on $X$ we mean a positive, locally integrable function $w$ on $X$, and for a measurable set $E\subseteq X$, the integral $\int_Ew(x)\:d\mu(x)$ is denoted by $w(E)$. For a locally integrable function $f$ on $X$, the centered Hardy-Littlewood maximal function $M_Xf$ is defined as 
	\begin{equation}\label{hlmax}
	   M_{X}f(x)=\sup_{r>0}\frac{1}{\mu(B(x,r))}\int_{B(x,r)}|f(y)|\;d\mu(y).
    \end{equation} 
    %We shall drop the suffix when there is no possibility of arising confusion. 
    When $X=\R^d$ equipped with the usual Euclidean metric and the Lebesgue measure, $M_{\R^d}$ was introduced by Hardy and Littlewood in the case of $d=1$, and the higher dimensional case was studied by Wiener. 
%One of the important operators in the field of Harmonic analysis is the Hardy-Littlewood maximal operator. The study of various mapping properties of it on different types of function spaces has always been an active area of research. For a locally integrable function $f$ on $\R^d$, the centered Hardy-Littlewood maximal function $Mf$ is defined as 
%	\begin{equation*}
%	   Mf(x)=\sup_{r>0}\frac{1}{|B(x,r)|}\int_{B(x,r)}|f(y)|\;dy,
%    \end{equation*}
%where $|B(x,r)|$ denotes the Lebesgue measure of the open ball $B(x,r)$ in $\R^d$. \
It is well known that $M_{\R^d}$ is bounded on $L^p(\R^d)$, $1<p\leq\infty$, and of weak type $(1,1)$. In their remarkable work  \cite{FS}, extending the weak $(1,1)$ estimate by introducing weights, Fefferman and Stein proved the following inequality:
given a non-negative, locally integrable function $w$ on $\R^d$ the one has
\be\label{feffer}
w(\{x\in \R^d: M_{\R^d}f(x)>\lambda\})\lesssim_d \frac{1}{\lambda}\int_{\R^d}|f(x)|M_{\R^d}w(x)\:dx.
\ee
%Throughout this article, a non-negative, measurable function $w$ on a measure space $(X,\mu)$ is called a weight, and for a measurable set $E\subseteq X$, the integral $\int_Ew(x)\:d\mu(x)$ is denoted by $w(E)$. Among many important consequences of this inequality, one is the seminal work of Muckenhoupt \cite{Mu} on the theory of $A_p$ weights.
 It immediately follows from (\ref{feffer}) that $M_{\R^d}$ is a bounded operator from $L^1(\R^d, wdx)$ to $L^{1,\infty}(\R^d,wdx)$ whenever $M_{\R^d} w\leq w$ a.e., which is written symbolically as $w\in A_1$.
%We recall that $w\in A_1$ if $Mw\leq w$ a.e.  Therefore, it follows from (\ref{feffer}) that $M$ is a bounded operator from $L^1(\R^d,w\:dx)$ to $L^{1,\infty}(\R^d,w\:dx)$ whenever $w\in A_1$.
Fefferman and Stein \cite{FS} noted that the condition $w\in A_1$ is also necessary. 

For $1<p<\infty$, Muckenhoupt \cite{Muc} introduced the $A_p$ class of weights by the prescription that $w\in A_p$ if 
%The work of Muckenhoupt \cite{Mu} initiated the area of weighted $L^p$ inequalities. He proved that for $1<p<\infty$. the following statements are equivalent.
%\begin{enumerate}
%\item[$\bullet$]  $w\in A_p$, that is
\begin{equation}\label{euclap}
\sup_{B}\left(\frac{1}{|B|}\int_{B}w(x)\:dx\right)\left(\frac{1}{|B|}\int_{B}w(x)^{-\frac{1}{p-1}}\:dx\right)^{p-1}<\infty,
\end{equation}
where the supremum is taken over all balls $B$ in $\R^d$. He proved that $w\in A_p$ is both necessary and sufficient for $M_{\R^d}$ to be bounded on $L^p(\R^d,wdx)$.
In proving both weighted and unweighted boundedness results for various operators, the Calderon-Zygmund decomposition and certain covering lemmas have been instrumental.
Since these two monumental works, the weighted inequalities, particularly the Fefferman-Stein inequality and related variants, have been investigated in many situations and for a number of operators; see, for instance, \cite{Bel, LP, OC} and references therein. The Calderon-Zygmund operators constitute an important class of such examples; see \cite{FS, LOR, LKP} and references therein.

Beyond Euclidean spaces, weighted norm inequalities for the Hardy-Littlewood maximal function have also been studied in the context of doubling metric measure spaces \cite[Chapter 2]{Hei}. We would also like to mention that a lot of work has been done in non-doubling metric measure spaces for suitable modification of the Hardy-Littlewood maximal function \cite{Hyt, NTV, Swa, Ste}. In these works, $\mu(B(x,r))$ has been replaced by $\mu(B(x,kr))$, for some $k\geq 2$, in the denominator of the right-hand side of (\ref{hlmax}).

Based on what has been discussed so far, the weak $(1,1)$ boundedness of the Hardy-Littlewood maximal operator might appear to have a connection with some doubling condition in the metric measure space under consideration.  In \cite[Theorem 1.5]{NT}, among other things, Naor and Tao have shown that there are metric measure spaces without any kind of doubling condition where the weak $(1,1)$ boundedness of the Hardy-Littlewood maximal operator holds true. More precisely, they developed an ingenious strategy based on combinatorial arguments to obtain the weak-type $(1,1)$ boundedness of the Hardy-Littlewood maximal operator on infinite rooted $k$-ary tree $T_k$, where $k\geq 2$ is an integer (a.k.a. the homogeneous tree of degree $k$), with bound independent of $k$. It is known that if $B(x,r)$ is a ball with center at $x\in T_k$ and radius $r\in \N$, then $|B(x,r)|\simeq k^r$. Due to this exponential growth, covering arguments do not work since the doubling condition or, even more generally, the upper doubling property introduced by Hyt\"{o}nen \cite{Hyt} completely fails. We remark that the weak $(1,1)$ boundedness of the maximal operator on $T_k$ can also be deduced from the work of Rochberg and Taibleson \cite{RT}. Later, a simplified proof was given by Cowling et al.\cite{CMS}. Their argument is based on the proof of weak $(1,1)$ boundedness of the Hardy-Littlewood maximal operator on the Harmonic $NA$ groups due to Anker, Damek, and Yacoub \cite{ADY}. The proof given by Anker et al. was inspired by Str\"{o}mberg's work on Riemannian symmetric spaces of non-compact type \cite{Str}. We shall discuss more on this in the later part of this section. Nevo and Stein \cite{NS} established the strong type $(p,p)$ estimate, where $p>1$, of the maximal function.

Motivated by the work of Naor and Tao, in \cite{ORS}, Ombrosi, Rivera-Ríos, and Safe initiated the study of weighted estimates for the Hardy-Littlewood maximal function on infinite rooted $k$-ary trees. Among other things, they proved the following strong version of the Fefferman-Stein inequality on $T_k$: given $s>1$, and a weight $w$ on $T_k$, the following holds:
\begin{equation}\label{fstree}
 w(\{x\in T_k: M_{T_k}f(x)>\lambda\})\leq \frac{C_{s}}{\lambda}\sum_{x\in T_k}|f(x)|M_{T_k}(w^s)(x)^{1/s}\:dx,   
\end{equation} 
where the constant $C_s\to\infty$ as $s\to 1$. Moreover, they showed that in general (\ref{fstree}) does not hold for $s=1$.
By appropriately extending the techniques developed by Naor and Tao,  Ombrosi and Rivera-Ríos introduced the following condition 
  \begin{equation}
 \mathds{1}\otimes w(\{(x,y)\in E\times F: d_{T_k}(x,y)=r\}) \lesssim k^{ r\beta} w(E)^{\frac{\alpha}{p}}w(F)^{1-\frac{\alpha}{p}},
\end{equation}
on a weight $w$, which they proved to be sufficient for the boundedness of $M_{T_k}$ on $L^p(w)$, for $p\in(1,\infty)$ and certain ranges of the parameters $\alpha$ and $\beta$; see \cite[Theorem 1.1]{ORR} for more details. In fact, they worked with the spherical maximal operator as the Hardy-Littlewood maximal operator is equivalent to this in the context of $T_k$.

The present article seeks to continue this trend by extending the aforementioned weighted estimates for Hardy-Littlewood maximal functions in the context of Harmonic $NA$ groups. We begin by introducing a few notations necessary to illustrate the body of research in this area and present the main results of this article.

Harmonic $NA$ groups also known as Damek-Ricci spaces, were introduced by Damek and Ricci in 1992 as a family of counterexamples to the Lichnerowicz conjecture in the non-compact case \cite{DR1}. A Harmonic $NA$ group $S$ is a semidirect product $N\rtimes A$, where $N$ is a $H$-type group, $A=(0,\infty)$, and the action of $A$ on $N$ is anisotropic dilations. For any unexplained notions and terminologies, we refer the reader to Section \ref{prelim}. It is known that $S$ is a non-unimodular solvable Lie group which is also a non-flat harmonic manifold with respect to a metric that is left invariant under the action of $S$. Despite being the most distinct prototypes, the rank one Riemannian symmetric spaces of non-compact type, which are contained within them as Iwasawa $NA$ groups, only make up a very small subclass \cite{ADY}.

It is known that the volume of the ball $|B(x,r)|\simeq e^{2\rho r}$, for $r\geq1$, where the positive number $2\rho$ is the so-called homogeneous dimension of $N$, which prevents $S$ from being doubling. This leads to the absence of an analogue of the Calderon-Zygmund decomposition or any useful covering lemma. Nonetheless, using a convolution inequality for semi-simple Lie groups, Clerc and Stein \cite{CS} obtained the $L^p$-boundedness for $p>1$ of the centered Hardy-Littlewood maximal operator on Riemannian symmetric spaces of noncompact type. By getting beyond this aforementioned difficulty, the endpoint case was subsequently solved by Str\"{o}mberg \cite{Str}. The weak $(1,1)$ estimate resulting from Str\"{o}mberg's work demonstrates that the centered Hardy-Littlewood maximal operator in this setting enjoys the same properties as in the case of the Euclidean space despite the stark contrasts between both of these spaces. We remark in passing by that Ionescu established $L^p$-boundedness of the non-centered Hardy-Littlewood maximal operator for $p>2$ on Riemannian symmetric spaces of noncompact type (the exponent is sharp when the rank is one) in his remarkable works \cite{Ior1, Iorh}. Adapting the argument of Str\"{o}mberg for the particular case of rank one, Anker, Damek, and Yacoub \cite{ADY}  proved the weak $(1,1)$ estimate for the centered Hardy-Littlewood maximal operator on Harmonic $NA$ groups.

Despite the above list of outstanding works, the study of weighted estimates of the Hardy-Littlewood maximal operators on symmetric spaces or on Harmonic $NA$ groups is an unexplored territory. Our aim in this paper is to investigate the weighted boundedness of the centered Hardy-Littlewood maximal operator on Harmonic $NA$ groups. Motivated by the works of Ombrosi, Rivera-Ríos, and Safe \cite{ORR, ORS}, we shall define a notion of $A_p$ weights, $1<p<\infty$, on Harmonic $NA$ groups and prove weak and strong type $(p,p)$ estimates for the maximal operator and a variant of Fefferman-Stein inequality. We will also provide examples and counter-examples of various weights in order to illustrate various aspects of our results and demonstrate the difficulty that is stopping us from extending these results to symmetric spaces of higher rank.   
 
We now briefly, but in precise terms, discuss the main results of this paper. In what follows, $S=NA$ is a Harmonic $NA$ group equipped with a left-invariant Riemannian metric $d$, and a left Haar measure. From now onwards, we denote the centered Hardy-Littlewood maximal operator on $S$ by $M$. In order to study the behavior of $M$, following \cite{ADY}, it is customary to consider separately the local and large-scale variant of $M$, denoted by $M^0$ and $M^{\infty}$ respectively, and defined by 
\begin{align*}
 &M^{0}f(x)=\sup_{0<r\leq T}\frac{1}{|B(x,r)|}\int_{B(x,r)}|f(y)|\;dy;\\&M^{\infty}f(x)=\sup_{r>T}\frac{1}{|B(x,r)|}\int_{B(x,r)}|f(y)|\;dy, 
 \end{align*}
 where $|A|$ stands for the left Haar measure of a Borel measurable set $A\subseteq S$ and $T>0$ is a fixed number. Since the left Haar measure on $S$ is \textit{locally doubling}, in view of techniques for doubling metric measure spaces, it is not hard to see that $M^0$ is bounded on $L^p(w)$, $1<p<\infty$, whenever $w$ satisfies the Euclidean type $A_p$ condition locally, i.e., 
 \begin{align}\label{aploc}
     \sup_{0<r(B)\leq T}\left(\frac{1}{|B|}\int_{B}w(x)dx\right)\left(\frac{1}{|B|}\int_{B}w(x)^{-\frac{1}{p-1}}dx\right)^{p-1}<\infty,
 \end{align}
 where $r(B)$ denotes the radius of the ball $B$. We designate this class of weights as $A_{p, loc}(S)$. However, the Euclidean type $A_p$ condition is not necessary in the sense that there are weights $w$ for which $M$ is bounded on $L^p(w)$, but $w$ do not satisfy the Euclidean type $A_p$ condition (\ref{euclap}); see Example \ref{Apnot}. Nevertheless, following Ombrosi and Rivera-Ríos \cite{ORR}, a necessary condition can be obtained. Indeed, suppose that $w$ is a weight on $S$ for which $M$ is bounded on $L^p(w)$ with $1<p<\infty$. Then letting $\mathds{1}\otimes w$ stand for the standard product measure on $S\times S$,
 one can arrive at (see Section \ref{wlp}) 
 \begin{align}\label{neceapint}
     \mathds{1}\otimes w(\{(x,y)\in E\times F: d(x,y)<N\}) \leq C e^{2\rho N} w(E)^{\frac{1}{p}}w(F)^{1-\frac{1}{p}},
 \end{align}
 for all integers $N\geq 1$, and measurable sets $E, F\subseteq S$. It can be shown that the necessary condition (\ref{neceapint}) is not a sufficient condition for a weight, nor even for the weighted weak type $(p,p)$ estimates; see Example \ref{growthnec}. Nonetheless, assuming slightly less growth on $e^{2\rho }$, we give the following definition of $A_p$ weights, which turns out to be sufficient for weighted estimates.
 \begin{defn}\label{admap}(Admissible $A_p$ weights)
Let $1<p<\infty$, $\beta\in(0,1)$, $\beta\leq\alpha<p$. We say that a weight $w$ on $S$ is an admissible $A_p$ weight with parameters $(\alpha,\beta)$ if 
	\begin{enumerate}
		\item $w\in A_{p, loc}(S)$, i.e., $w$ satisfies (\ref{aploc}), and
		\item There exists a constant $C>0$ such that for every $N\in\N$, the following holds 
		$$\mathds{1}\otimes w(\{(x,y)\in E\times F: d(x,y)<N\})\leq C e^{2\rho\beta N} w(E)^{\frac{\alpha}{p}}w(F)^{1-\frac{\alpha}{p}}, $$
		for any pair of measurable sets $E, F\subseteq S.$ 
	\end{enumerate}
\end{defn}
\begin{rem}
We shall see in Section \ref{exmple} that the above class includes numerous examples of non-trivial weights. Interestingly, it includes certain positive spherical functions on $S$ or more generally Jacobi functions with specific parameters. To do so, we take advantage of the asymptotic behavior of Jacobi functions. We will discuss more on these in the next section.
\end{rem}
We now present the first main result of this article.
\begin{thm}\label{weightedp}
Let $1<p<\infty$, $\beta\in (0,1)$, $\beta\leq \alpha<p$. Suppose that $w$ is an admissible $A_p$ weight with parameters $(\alpha,\beta)$. Then the following statements are true.
\begin{enumerate}
    \item If $\beta<\alpha$, then 
    \begin{equation}\label{strongp}
        \|Mf\|_{L^p(w)}\lesssim\|f\|_{L^p(w)}.
    \end{equation}
    \item If $\beta=\alpha$, then
    \begin{equation}\label{weakp}
    \|Mf\|_{L^{p,\infty}(w)}\lesssim\|f\|_{L^p(w)}.
\end{equation}
\end{enumerate}
\end{thm}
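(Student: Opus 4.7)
Following the local/global splitting of \cite{ADY}, I write $Mf \leq M^0 f + M^\infty f$, where $M^0$ and $M^\infty$ take suprema over ball radii $r \leq T$ and $r > T$ respectively. Since the left Haar measure on $S$ is locally doubling and $w \in A_{p,loc}(S)$, the boundedness of $M^0$ on $L^p(w)$ is immediate from the standard Muckenhoupt theory in doubling metric measure spaces (see \cite[Chapter~2]{Hei}); hence only $M^\infty$ requires new ideas.

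For $M^\infty$, the purely exponential volume growth $|B(x,r)| \simeq e^{2\rho r}$ for $r \geq T$ allows me to discretize the supremum to integer radii, giving $M^\infty f(x) \lesssim \sup_{N \in \N,\, N \geq T} e^{-2\rho N} \int_{B(x,N+1)} |f(y)|\,dy$. Setting $E_{\lambda,N}$ to be the set where this scale-$N$ discretized average exceeds $\lambda$, one has $\int_{B(x,N+1)} |f|\,dy \gtrsim \lambda e^{2\rho N}$ on $E_{\lambda,N}$; multiplying by $w$ and applying Fubini produces the key inequality
\[
\lambda\, e^{2\rho N}\, w(E_{\lambda,N}) \;\lesssim\; \int_S |f(y)|\, w\bigl(E_{\lambda,N} \cap B(y,N+1)\bigr)\,dy.
\]
For an indicator $f = \chi_F$, the admissible $A_p$ condition applied to the right-hand side (with the Lebesgue slot being $F$ and the $w$-slot being $E_{\lambda,N}$) gives, after rearrangement, $w(E_{\lambda,N}) \lesssim \lambda^{-p/\alpha}\, e^{-2\rho(1-\beta)Np/\alpha}\, w(F)$. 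The geometric series $\sum_N e^{-2\rho(1-\beta)Np/\alpha}$ converges since $\beta<1$, yielding a level-set estimate on indicators; the passage to general $f \geq 0$ is then handled via a layer-cake decomposition $f = \int_0^\infty \chi_{\{f>s\}}\,ds$ (or a dyadic decomposition $f \simeq \sum_k 2^k \chi_{\{f>2^k\}}$) combined with the admissible condition applied to each superlevel set.

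The two cases of the theorem are distinguished by how this level-set bound is converted into the target norm inequality. In case (2) ($\beta = \alpha$), the exponents align so that combining the estimate with the standard truncation $f = f\chi_{\{f\leq\lambda\}} + f\chi_{\{f>\lambda\}}$ and the trivial bound $\|M\|_{L^\infty \to L^\infty} = 1$ produces exactly the weak-type $(p,p)$ statement. In case (1) ($\beta < \alpha$), the strictly positive gap $\alpha - \beta$ supplies an extra exponential decay factor after summation in $N$, which can be traded through Marcinkiewicz interpolation against the trivial $L^\infty$ bound to upgrade the weak estimate to the strong $(p,p)$ one. The principal technical obstacle I anticipate is precisely this passage from the indicator-level bound to a general function: without a Calder\'on--Zygmund decomposition (unavailable on $S$ due to the exponential volume growth), one must coordinate the admissible condition with the local $A_p$ hypothesis and the sublinearity of $M^\infty$ so that the $L^p(w)$-norm is preserved rather than degenerating into an incomparable Lorentz-type scale such as $L^{p/\alpha,1}(w)$, and it is the slack afforded by $\alpha - \beta > 0$ that ultimately makes the strong-type argument close.
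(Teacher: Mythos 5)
Your overall skeleton (local/global splitting, treating $M^0$ via $A_{p,loc}(S)$, discretizing $M^\infty$ to integer radii, and dualizing the averaging operator against $w$ so that the product-measure condition can be applied) is the same as the paper's, and your indicator estimate $w(E_{\lambda,N})\lesssim \lambda^{-p/\alpha}e^{-2\rho(1-\beta)Np/\alpha}w(F)$ is correct. The genuine gap is the step you defer to a "layer-cake/dyadic decomposition combined with the admissible condition applied to each superlevel set": this does not close, and the difficulty is present already in the weak-type case $\beta=\alpha$, not only in the strong-type case. The indicator bound is a restricted weak-type estimate at the exponent $p/\alpha$, not $p$, and it is not scale-invariant in $f$: a bound of the form $w(\{A_Nf>\lambda\})\lesssim \lambda^{-p/\alpha}\|f\|^p_{L^p(w)}$ for general $f$ is impossible (replace $f\mapsto tf$, $\lambda\mapsto t\lambda$ and let $t\to\infty$, noting $\alpha<p$ and in the relevant examples even $\alpha\le 1$, e.g. $\alpha=\beta=1/2$). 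Concretely, if you decompose $f\simeq\sum_k 2^k\chi_{F_k}$ and apply your indicator bound to each piece with thresholds $\epsilon_k\lambda 2^{-k}$, the resulting sum carries the factor $\epsilon_k^{-p/\alpha}2^{kp(1/\alpha-1)}$, which cannot be summed when $\alpha\le 1$; the large values of $f$ destroy the argument. The paper's way around this is its Lemma on $w(\{A_N(A_1f)\ge\lambda\})$: one first pre-averages ($M^\infty f\lesssim\sup_N A_N(A_1f)$), then decomposes $A_1f$ only into the finitely many levels $e^{2\rho k}$ with $0\le k\le N$, and treats the exceptional set $E_\infty=\{A_1f\ge e^{2\rho N}\}$ by a purely geometric containment argument (if $B(x,N)$ meets $E_\infty$ then $A_{N+1}$ of the indicator of $\{A_2f\gtrsim e^{2\rho N}\}$ is $\gtrsim e^{-2\rho N}$ at $x$), applying the admissible condition to that indicator. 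This truncation at height $e^{2\rho N}$, together with a careful choice of the thresholds defining the sets $F_k$, is exactly what converts the non-scale-invariant indicator estimate into the usable bound $w(\{A_N(A_1f)\ge\lambda\})\lesssim\sum_{k=0}^N e^{\rho(k-N)(1-\beta)p/\alpha}e^{2\rho\beta pk/\alpha}\,w(\{A_2f>\eta\lambda e^{2\rho k}\})$, and it is this mechanism that your proposal lacks.

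A second, smaller issue is your route to the strong-type bound in case (1): Marcinkiewicz interpolation of a weak $(p,p)$ estimate against the trivial $L^\infty$ bound only yields strong type for exponents strictly larger than $p$, and you have no weak-type information below $p$ (the admissible condition is tied to the exponent $p$ itself, and the paper shows weak $(q,q)$ can fail for $q<p$ when $\beta=\alpha$). The paper does not interpolate: it integrates the level-set lemma in $\lambda$ to obtain the operator-norm decay $\|A_Nf\|_{L^p(w)}\lesssim e^{-2\rho N(1-\beta/\alpha)}\|A_2f\|_{L^p(w)}$, which uses $\beta<\alpha$ precisely to make $\sum_N\|A_Nf\|_{L^p(w)}$ converge, and then sums over $N$; the weak case $\beta=\alpha$ is instead handled by summing in $N$ first and exploiting that $\sum_k e^{2\rho pk}\chi_{\{A_2f>\eta\lambda e^{2\rho k}\}}\lesssim (A_2f/\lambda)^p$ pointwise. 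You would need to supply these (or equivalent) arguments to complete both parts of the theorem.
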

\begin{rem}
    The theorem above is sharp in the sense that if $\beta=\alpha$, then there are examples of admissible $A_p$ weights with parameter $(\beta,\beta)$ for which weak type $(p,p)$ holds but the strong type $(p,p)$ does not hold; see Example \ref{notstrong}. Consequently, weak type $(q,q)$ also fails for all $q<p$.
\end{rem}
  We now state the second main result of this article, which is a variant of the Fefferman-Stein inequality (\ref{feffer}). 
  %The technique of Ombrosi et al. \cite{ORS} can be adapted to prove the following variant of Fefferman-Stein inequality on the Harmonic $NA$ group.
  \begin{thm}\label{fsna}
 	Let $w$ be a weight on $S$ and $s>1$.  Then there exists a constant $C_s>0$ such that for all $\lambda>0$, we have 
	\begin{equation}\label{feffna}
		w(\{x\in S: Mf(x)>\lambda\})\leq \frac{C_{s}}{\lambda}\int_{S}|f(x)|M_sw(x)dx,
	\end{equation}
where $M_sw=(M(w^s))^{\frac{1}{s}}$. Moreover, $C_{s}\ra \infty$ as $s\ra 1$.
 \end{thm}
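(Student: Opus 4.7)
The plan is to decompose $Mf \le C(M^0 f + M^\infty f)$ into local (radii $\le T$) and large-scale (radii $> T$) parts, following Anker-Damek-Yacoub, and prove the Fefferman-Stein inequality for each. The local part $M^0$ is handled by the classical Euclidean argument: the Haar measure on $S$ is locally doubling, so a Vitali-type covering of the level set $\{M^0 f > \lambda\}$ by disjoint balls $\{B_j\}$ (of radii $\le T$) whose fivefold enlargements cover, together with local doubling and the defining inequality $\lambda|B_j| < \int_{B_j}|f|$, yields
\[
w(\{M^0 f > \lambda\}) \le \frac{C}{\lambda}\int_S |f(x)|\,Mw(x)\,dx \le \frac{C}{\lambda}\int_S |f(x)|\,M_s w(x)\,dx,
\]
where the last inequality uses $Mw \le M_s w$ by Jensen ($s>1$). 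This piece contributes a constant independent of $s$.

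The large-scale part $M^\infty$ is the genuine difficulty: the purely exponential volume growth $|B(x,r)| \simeq e^{2\rho r}$ rules out doubling-based covering arguments. I would adapt the tree-type strategy of Ombrosi, Rivera-R\'ios, and Safe. Set $E_\lambda = \{M^\infty f > \lambda\}$; for each $x \in E_\lambda$ choose $r_x > T$ with $\int_{B(x,r_x)} |f| > \lambda |B(x,r_x)|$, and set $n_x = \lceil r_x \rceil$. Decompose $E_\lambda = \bigsqcup_n E_n$ where $n_x = n$ on $E_n$, so that $\int_{B(x,n)} |f| \gtrsim \lambda\, e^{2\rho n}$ on $E_n$. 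By Fubini (using $y \in B(x,n) \iff x \in B(y,n)$),
\[
w(E_n) \le \frac{C}{\lambda\,e^{2\rho n}} \int_S |f(y)|\, w\bigl(E_n \cap B(y,n)\bigr)\, dy.
\]

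The critical step is H\"older's inequality with exponents $(s,s')$ applied to the inner weighted measure:
\[
w\bigl(E_n \cap B(y,n)\bigr) \le |E_n \cap B(y,n)|^{1/s'} \left(\int_{B(y,n)} w^s\right)^{1/s} \lesssim |E_n \cap B(y,n)|^{1/s'}\, e^{2\rho n/s}\, M_s w(y).
\]
Combined with the prefactor $e^{-2\rho n}$, this leaves $e^{-2\rho n/s'}$, which sums over $n > T$ to a convergent geometric series with value $\sim (1 - e^{-2\rho/s'})^{-1}$, blowing up as $s \to 1^+$; this is the source of the $s$-dependence and the stated behavior $C_s \to \infty$.

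The main obstacle is the proper control of the factor $|E_n \cap B(y,n)|^{1/s'}$: the trivial bound by $|B(y,n)|^{1/s'} = e^{2\rho n/s'}$ would cancel the H\"older gain entirely. The remedy must exploit the fact that every point of $E_n$ supports a ball of radius $\sim n$ on which $|f|$ averages $\gtrsim \lambda$, which severely limits how densely such points may accumulate. A Naor-Tao style combinatorial/covering argument, adapted to the exponential growth of $S$ via left-invariance of the Haar measure and the explicit volume formula $|B(x,r)| \simeq e^{2\rho r}$, should furnish the required $n$-uniform bound (possibly after a rearrangement of the $y$-integral to absorb any residual $\|f\|_1/\lambda$ factor). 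This delicate step, rather than the H\"older calculation itself, constitutes the technical heart of the proof.
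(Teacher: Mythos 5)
Your treatment of the local part and the initial reductions for $M^{\infty}$ (choosing $r_x$, discretizing to $n$, Fubini, and H\"older with exponent $s$ to bring in $M_s w$) are sound, and you correctly locate one source of the blow-up $C_s\to\infty$ in the geometric series $\sum_n e^{-2\rho n/s'}$. But the proof has a genuine gap exactly where you say the "technical heart" lies, and the hoped-for remedy is not available in the form you describe. There is no $n$-uniform bound on $|E_n\cap B(y,n)|^{1/s'}$: for a fixed $\lambda$ the set $E_n$ can have measure comparable to $\min(|E_n|, e^{2\rho n})$ inside a ball $B(y,n)$, and any bound involving $\|f\|_1/\lambda$ (which is what a weak-type estimate would give, and which you cannot invoke since it is what is being proved in weighted form) reintroduces a factor that destroys linearity in $f$. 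So the argument as proposed stalls precisely at the step that defeats the exponential volume growth, and "a Naor--Tao style combinatorial argument should furnish the bound" is an appeal to a mechanism you have not specified.

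The paper's actual mechanism is different and is worth contrasting. Instead of estimating $|E_n\cap B(y,n)|$, it proves a bilinear estimate (Lemma 4.2): for arbitrary measurable $E,F$,
\[
\int_F A_N(\chi_E)\,w \;\lesssim\; e^{-2\rho N/(s'+1)}\, w(F)^{1/(s'+1)}\, M_s w(E)^{s'/(s'+1)},
\]
obtained by decomposing $E$ and $F$ into annuli $\Omega_j$, using the Gromov-hyperbolicity consequence $|B(x,s)\cap B(y,t)|\simeq e^{\rho(s+t-d(x,y))}$ (Proposition 2.5) to get \emph{two} competing bounds for each annular piece --- one proportional to $w(F_i)$, one (via H\"older with exponent $s$) proportional to $M_s w(E_j)$ --- and optimizing the split between the two regimes. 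The fractional power $w(F)^{1/(s'+1)}$, with $F$ the level set itself, is then absorbed into the left-hand side; this absorption is carried out through a level-set decomposition of $A_1 f$ at heights $e^{2\rho k}$ (Lemma 4.1), which is what ultimately converts the bilinear bound into the linear-in-$f$ weak-type inequality, yielding first $w(\{M^\infty f\ge\lambda\})\lesssim \lambda^{-1}\int |f|\,A_2(M_s w)$. A final step, absent from your sketch, replaces $A_2(M_s w)$ by $C_s M_s w$ using Kolmogorov's inequality together with the unweighted weak $(1,1)$ bound of Anker--Damek--Yacoub; this is the second (and explicit) source of $C_s\to\infty$ as $s\to1$. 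In short: the H\"older step you propose appears in the paper too, but the decisive ingredients --- the two-ball intersection volume formula, the min-of-two-bounds optimization over annuli, and the absorption of a fractional power of the weighted measure of the level set --- are missing from your proposal, and without them the estimate you need does not close.
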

 As we shall see that the natural analogue of the Fefferman-Stein inequality does not hold in Harmonic $NA$ groups. In fact, we will show, by means of an example, that not only it is not possible to choose $s=1$ in (\ref{feffna}), but any number of iterations of the maximal operator is not sufficient for the validity of the Fefferman-Stein inequality. More generally, we have the following. 
\begin{thm}\label{thm_FS_not_1}
Let $k\in\N$ and $1\leq p<\infty $. Then there exists a weight $w$ on $S$ and a sequence $\{f_j\}\subset L^{p}(M^{(k)} w)$ such that 
\begin{equation}\label{eqn_not_FS}
     w\left( \left\{ M f_j >1\right\}\right) \geq c_j \int_S \left| f_j(x) \right|^p M^{(k)} w(x)\,dx,
\end{equation}
where $M^{(k)} = M\circ \overset{{k \text{ times}}}{\cdots} \circ M$, and $c_j \rightarrow \infty$, as $j\rightarrow \infty$.
\end{thm}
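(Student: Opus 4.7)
The strategy is to construct, for each $n\in\N$, a local counterexample $(w_n, g_n)$ witnessing failure of the Fefferman-Stein inequality with constant at least $n$, and then glue these counterexamples together into a single weight $w:=\sum_{n\geq 1} w_n$ (with supports pairwise far apart in $S$) and a sequence $f_j:=g_j$ (supported where $w_j$ is). The key geometric input is the purely exponential volume growth $|B(x,r)|\simeq e^{2\rho r}$ of $S$: for a locally supported weight $v$ one has $Mv(x)\lesssim e^{-2\rho\, d(x,\mathrm{supp}\,v)}$ away from $\mathrm{supp}\,v$, and iterating $M$ improves this only by a factor polynomial in $d(x,\mathrm{supp}\,v)$, never breaking the exponential rate. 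It is precisely this exponential-versus-polynomial mismatch that renders no finite number of iterations $M^{(k)}$ sufficient to restore Fefferman-Stein.

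For the gluing step, once the pairs $(w_n,g_n)$ are in hand, I would place each inside a region $R_n\subset S$ with $d(R_n, R_m)\to\infty$ sufficiently fast. By the sublinearity $M^{(k)}\bigl(\sum_n w_n\bigr)\leq \sum_n M^{(k)}w_n$ together with the exponential decay of $M^{(k)}w_n$ outside $R_n$, the cross terms $\int_S |f_j|^p M^{(k)}w_n\,dx$ with $n\neq j$ become negligible compared to $\int |g_j|^p M^{(k)}w_j$. Hence
\bes
\int_S |f_j|^p M^{(k)} w\,dx \;\simeq\; \int |g_j|^p M^{(k)}w_j\,dx, \qquad w\bigl(\{Mf_j>1\}\bigr)\;\geq\; w_j\bigl(\{Mg_j>1\}\bigr),
\ees
so the scale-wise bound transfers to give $c_j\geq j\to\infty$.

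The main obstacle is the construction of the scale-wise pair $(w_n,g_n)$ with a ratio that genuinely grows with $n$. All naive radial attempts — e.g.\ $g_n=A_n\chi_{B(e,1)}$ paired with $w_n=\chi_{B(y_n,1)}$ where $d(y_n,e)=R_n$ and $A_n\simeq e^{2\rho R_n}$ is chosen so that $Mg_n>1$ on $\mathrm{supp}\,w_n$ — yield only a \emph{bounded} ratio, because the decay $Mw_n(x)\simeq e^{-2\rho d(x,y_n)}$ and the $L^p$-cost $A_n^p\simeq e^{2\rho p R_n}$ cancel exactly under the purely exponential volume growth. Breaking this balance will require a non-radial, horocyclically-adapted choice of $w_n$, exploiting the semidirect-product structure $S=NA$ with its anisotropic $A$-action on $N$, so that the set $\{Mg_n>1\}$ picks up $n$-many disjoint pieces of $w_n$-mass without a corresponding blow-up of the $L^p(M^{(k)}w_n)$-cost of $g_n$. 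Verifying the required sharp pointwise estimates on $M^{(k)}w_n$ — in particular, that iteration contributes no exponential improvement — is the main technical step and depends essentially on the purely exponential volume growth of $S$, the very feature distinguishing Harmonic $NA$ groups from the Euclidean setting in which Fefferman-Stein with $s=1$ does hold.
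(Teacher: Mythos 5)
There is a genuine gap: everything in your plan hinges on the existence of scale-wise pairs $(w_n,g_n)$ whose Fefferman--Stein ratio grows like $n$, and you acknowledge that you have not constructed them --- you note that the natural radial attempts give only a bounded ratio and defer the actual construction (``non-radial, horocyclically adapted'') to an unspecified main technical step. Without that ingredient the gluing scheme proves nothing, so as it stands this is a plan rather than a proof. Moreover, the obstruction you ran into is an artifact of your specific test (a hugely amplified indicator of a unit ball against a far-away bump weight, where the $L^p$ cost $A_n^p\simeq e^{2\rho pR_n}$ exactly cancels the decay $Mw_n\simeq e^{-2\rho d(\cdot,y_n)}$); it does not indicate that radial constructions fail, and no horocyclic or non-radial structure is needed.

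The paper's proof uses a single radial weight and a different mechanism of gain, with no gluing. Take $w(x)=e^{-2\rho d(e,x)}$ and let $\Omega_i=B(e,i)\setminus B(e,i-1)$. The annulus computation already done for the weights $w_\gamma$ (case $\gamma=-1$) gives $Mw\lesssim w$, hence $M^{(k)}w\lesssim_k w$ for every fixed $k$: this is the precise form of your ``iteration gains nothing'' heuristic, but for a globally defined weight. Now take $f_j=\chi_{\Omega_j}$, with no large amplitude at all. Then $\int_S|f_j|^pM^{(k)}w\,dx\lesssim_k w(\Omega_j)\simeq 1$, since $w\simeq e^{-2\rho j}$ on $\Omega_j$ and $|\Omega_j|\simeq e^{2\rho j}$ (and $|f_j|^p=f_j$, so all $p$ are handled at once). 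On the other hand, by the ball-intersection estimate \eqref{annular} (Proposition \ref{indira}), for $x\in\Omega_i$ with $i\le j$ the radius $r=j-i+1$ gives $A_rf_j(x)=|B(x,r)\cap\Omega_j|/|B(x,r)|\simeq e^{\rho(2(j-i)+1)}e^{-2\rho(j-i+1)}\gtrsim 1$ uniformly, so $Mf_j\geq C$ on $\bigcup_{i\le j}\Omega_i=B(e,j)$, a set of $w$-measure $\simeq j$ because each annulus carries unit $w$-mass. Hence $w(\{Mf_j>C\})\gtrsim j\gtrsim_k j\int_S|f_j|^pM^{(k)}w\,dx$, and replacing $f_j$ by $C^{-1}f_j$ yields the theorem with $c_j\simeq j\to\infty$. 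The growth comes from the superlevel set $\{Mf_j>1\}$ sweeping up $j$ annuli of unit weight-mass, a mechanism your bump-weight test cannot see; this is the idea missing from your proposal.
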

Fefferman and Stein used their inequality (\ref{feffer}) to prove vector-valued maximal inequalities. Using Theorem \ref{fsna}, we offer the following analogoue on Harmonic $NA$ groups. 
\begin{thm}\label{vectorvalued}
    Let $(f_1,f_2,f_3,...)$ be a sequence of functions on $S.$ Assume that $1<r\leq p<\infty.$ Then 
    $$\left\|\left(\sum_{n=1}^{\infty}Mf_n(\cdot)^r\right)^\frac{1}{r}\right\|_{L^p(S)}\lesssim_{p,r} \left\|\left(\sum_{n=1}^{\infty}|f_n(\cdot)|^r\right)^\frac{1}{r}\right\|_{L^p(S)}. $$
\end{thm}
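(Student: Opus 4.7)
The plan is to follow the classical Fefferman--Stein duality scheme, using the weighted inequality of Theorem \ref{fsna} in place of its Euclidean antecedent. The case $r=p$ is immediate from the $L^p$-boundedness of $M$ on $S$ (which follows from the unweighted instance of Theorem \ref{weightedp}, or from Anker--Damek--Yacoub combined with the trivial $L^\infty$ bound):
\[
\left\|\left(\sum_n (Mf_n)^p\right)^{1/p}\right\|_{L^p}^p = \sum_n \|Mf_n\|_{L^p}^p \lesssim \sum_n \|f_n\|_{L^p}^p = \left\|\left(\sum_n |f_n|^p\right)^{1/p}\right\|_{L^p}^p.
\]
Hence we may assume $1 < r < p < \infty$.

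The first real step is to upgrade Theorem \ref{fsna} to a strong-type weighted estimate. The inequality (\ref{feffna}) precisely says that, for each $s > 1$ and every weight $w$, the operator $M$ maps $L^1(M_s w)$ into $L^{1,\infty}(w)$. Since $M(w^s) \ge w^s$ a.e.\ by Lebesgue differentiation (valid on $S$ as the left Haar measure is locally doubling), we have $M_s w \ge w$ a.e., and hence $M$ also maps $L^\infty(M_s w)$ into $L^\infty(w)$ with norm at most $1$. An application of the Marcinkiewicz interpolation theorem, allowing distinct source and target measures, then yields, for every $1 < t < \infty$,
\[
\int_S (Mh)^t \, w \, dx \;\lesssim_{t,s}\; \int_S |h|^t \, M_s w \, dx.
\]

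With this in hand, I would dualize $L^{p/r}$: since $p/r > 1$,
\[
\left\|\sum_n (Mf_n)^r\right\|_{L^{p/r}} = \sup\left\{\int_S g \sum_n (Mf_n)^r\, dx : g \ge 0,\ \|g\|_{L^{(p/r)'}} = 1\right\}.
\]
Applying the strong-type bound of the previous step with $t=r$ and $w=g$ to each $f_n$, summing in $n$, and then using H\"older's inequality with conjugate exponents $p/r$ and $(p/r)'$, the right-hand side is dominated by
\[
C_{r,s} \left\|\sum_n |f_n|^r\right\|_{L^{p/r}} \|M_s g\|_{L^{(p/r)'}}.
\]
Since $(p/r)' > 1$, I pick $s \in (1, (p/r)')$, so that $(p/r)'/s > 1$ and the unweighted $L^{(p/r)'/s}$-boundedness of $M$ gives $\|M_s g\|_{L^{(p/r)'}} = \|M(g^s)\|_{L^{(p/r)'/s}}^{1/s} \lesssim \|g\|_{L^{(p/r)'}} = 1$. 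Taking the supremum over admissible $g$ and raising to the power $1/r$ then completes the proof.

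The principal technical point is Step~1 --- weighted Marcinkiewicz interpolation with possibly different source and target measures --- which is a routine extension of the classical theorem but must be carried out carefully. A more conceptual constraint is the need to choose $s \in (1,(p/r)')$ strictly, which both keeps the Fefferman--Stein constant $C_{r,s}$ of Theorem \ref{fsna} finite and explains why this scheme cannot reach $r=1$; this is consistent with Theorem \ref{thm_FS_not_1}, where the genuine $r=1$ analogue of Fefferman--Stein already fails on $S$.
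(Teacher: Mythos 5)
Your proposal is correct and follows essentially the same route as the paper: the paper also upgrades Theorem \ref{fsna} to the strong-type weighted bound $\int_S (Mf)^r w \lesssim_{r,s} \int_S |f|^r M_s w$ via Marcinkiewicz interpolation (its Corollary \ref{p-fsineq}), and then runs the identical duality argument in $L^{p/r}$, choosing $s<(p/r)'$ so that the unweighted boundedness of $M$ controls $\|M_s\psi\|_{L^{(p/r)'}}$. Your explicit treatment of the $L^\infty$ endpoint ($M_sw\ge w$ a.e.) and of the two-measure interpolation just fills in details the paper leaves implicit.
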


\begin{rem}\label{introrem}
The following remarks are in order.
\begin{enumerate}
    \item Very recently, Antezana and Ombrosi \cite{AO} proved Theorem \ref{fsna} and Theorem \ref{weightedp} for real hyperbolic spaces. One can view real hyperbolic spaces as degenerate cases of Harmonic $NA$ groups \cite[p.210]{CDKR}. We remark that all the results proved in this article are true in this degenerate case as well.
    \item It is worth mentioning that taking, in particular, $w=1$ in Theorem \ref{weightedp}, and in Theorem \ref{fsna} respectively, we recover the classical results of Clerc-Stein \cite{CS} and Stromberg \cite{Str}  for rank one symmetric spaces of noncompact type respectively with different techniques (see Example \ref{trivial_ex}).
    \item We use, as one of the main ingredients of the proof of our results, an explicit dependence of the measure of the intersection of two geodesic balls in $S$ on the radii and the distance between two centers. More generally, this result is true for \textit{Gromov hyperbolic spaces}, see Proposition \ref{indira} for more details. This feature is very special to the underlying geometric structure of the manifold. In fact, Csik\'{o}s and Horv\'{a}th \cite[Theorem 3]{CH} proved that a connected, simply connected, complete Riemannian manifold is harmonic if and only if the volume of the intersection of two geodesic balls depends only on the distance between the centers and the radii of the balls. But, in 2006, Heber \cite{H} demonstrated that the only noncompact, homogeneous harmonic manifolds are the Euclidean spaces, rank one symmetric spaces, and Harmonic $NA$ groups. The discussion above implies, in particular, that higher rank symmetric spaces of noncompact type do not enjoy this property (viz. \eqref{indirac}).  This forces us to consider only the setting of Harmonic $NA$ groups and prevents us from extending the results of this article to higher rank symmetric spaces. 
\end{enumerate}
\end{rem} 
We end this section by briefly describing the plan of this paper. In the following section, we describe the relevant preliminaries on Harmonic $NA$ groups and record certain results necessary for our purpose.  In Section 3, we introduce the admissible $A_p$ class of weights formally and provide various examples. In the same section, we prove the weighted $L^p$-boundedness of $M.$ Lastly, we prove Fefferman-Stein type inequalities in the last section. The promised examples to back up our assertions about various features of the aforementioned theorems will be provided in the appropriate locations. 
\vspace{3mm}

\textbf{Notations.} The letters $\N$, $\Z$, $\R$, and $\C$ will respectively denote the set of all natural numbers, the ring of integers, and the fields of real and complex numbers. For $ z \in \C $, we use the notations $\Re z$ and $\Im z$ for real and imaginary parts of $z$, respectively. We shall follow the standard practice of using the letters $c$, $C$, $c_0$, $C_1$, etc., for positive constants, whose value may change from one line to another. Occasionally, the constants will be suffixed to show their dependencies on important parameters. Throughout this article, we use $X\lesssim Y$ or $Y\gtrsim X$ to denote the estimate $X\leq CY$ for some absolute constant $C>0$; if we need $C$ to depend on parameters, we indicate this by subscripts, that is, $X\lesssim_{\varepsilon}Y$ means that $X\leq C_{\varepsilon}Y$ for some constant $C_{\varepsilon}>0$ depending only on the parameter $\varepsilon$. We shall also use the notation $X\simeq Y$ for $X\lesssim Y$ and $Y\lesssim X$. For $1\leq p\leq\infty$, $p'$ denote the conjugate exponent of $p$, i.e., $1/p+1/p'=1$.

\section{Preliminaries}\label{prelim}
In this section, we first set up the notation and required preliminaries for the remaining parts of the paper. The following discussion on Harmonic $NA$ groups is standard and can be found, for instance, in \cite{ADY, DR1, CDKR}.

\subsection{Harmonic $NA$ groups}\label{har} Let $\mathfrak n$ be a two-step real nilpotent Lie algebra equipped with an inner product $\langle,\rangle$. Let $\mathfrak z$ be the center of $\mathfrak n$ and $\mathfrak v$ its orthogonal complement. We say that $\mathfrak n$ is an $H$-type algebra if for every $Z\in \mathfrak z$  the map $J_Z:\mathfrak v\to \mathfrak v$ defined by
\begin{equation*}
\langle J_Z X, Y\rangle=\langle [X, Y], Z\rangle,\ X, Y\in \mathfrak v,
\end{equation*}
satisfies the condition
$J_Z^2=-|Z|^2I_{\mathfrak v}$, $I_{\mathfrak v}$ being the identity operator on $\mathfrak v$. A connected and simply connected Lie group $N$ is called an $H$-type group or Heisenberg-type group if its Lie algebra is an $H$-type algebra. Since $\mathfrak{n}$ is nilpotent, the exponential map is a diffeomorphism, and hence we can parametrize elements of $N=\exp\mathfrak{n}$ by $(X,Z)$ for $X\in\mathfrak{v},\:Z\in\mathfrak{z}$. It follows from the Baker-Campbell-Hausdorff formula that the group law of $N$ is given by
$$(X,Z)(X',Z')=(X+X',Z+Z'+\frac{1}{2}[X,X']).$$
When $\mathfrak z=\R$, $\mathfrak v=\R^{2l}$ and for $s\in\R$, $J_s:\R^{2l}\to \R^{2l}$ is given by
\begin{equation*}
J_s(x,y)=(-sy,sx),\:\:\:\:\:x\in\R^l,\:y\in\R^{l},
\end{equation*}
then we get the Heisenberg group $H^l$ which is the prototype of a $H$-type group. We also note that the Lebesgue measure $dXdZ$ is a Haar measure on $N$.
The group $A=(0,\infty)$ acts on an $H$-type group $N$ by non-isotropic dilation:
\begin{equation}\label{nonisotropic}
\delta_a(n)=\delta_a(X,Z)=(\sqrt{a}X,aZ),\:\:a\in A,\:n=(X,Z)\in N.
\end{equation}
A Harmonic $NA$ group $S$ is the semi-direct product of a $H$-type group $N$ and $A$ under the above action. Thus, the multiplication on $S$ is given by
$$(X,Z,a)(X',Z',a')=(X+\sqrt{a}X',Z+aZ'+\frac{1}{2}\sqrt{a}[X,X'],aa').$$
Then $S$ is a solvable, connected, and simply connected Lie group having Lie algebra $\mathfrak{s}=\mathfrak{n}\oplus\mathfrak{z}\oplus\R$ with Lie bracket
$$[(X,Z,u),(X',Z',u')]=(\frac{1}{2}uX'-\frac{1}{2}u'X,uZ'-u'Z+[X,X'],0).$$
We write $(n,a)=(X,Z,a)$ for the element $\left(\exp(X+Z),a\right),\:a\in A,\:X\in\mathfrak{v},\:Z\in\mathfrak{z}.$ We remark that for any $Z\in\mathfrak{z}$ with $\|Z\|=1$, $J_Z^2=-I_{\mathfrak{v}}$ and hence $\mathfrak{v}$ is even dimensional. We suppose that $dim\:\mathfrak{v}=m,\:dim\:\mathfrak{z}=k$. Then $Q:=m/2+k$ is called the homogeneous dimension of $N$. We denote by $l$ the topological dimension $m+k+1$ of $S$. For convenience, we shall also use the notation $\rho=Q/2$. We note that $\rho$ corresponds to the half-sum of positive roots when $S=G/K$, is a rank one symmetric space of noncompact type. We denote by $e$ the identity element $(\underline{0},1)$ of $S$, where $\underline{0}$, $1$ are the identity elements of $N$ and $A$ respectively. The group $S$ is equipped with the left-invariant Riemannian metric $d$ induced by the inner product
$$\langle (X,Z,u),(X',Z',u')\rangle=\langle X,X'\rangle+\langle Z,Z'\rangle+uu'$$
on $\mathfrak{s}$. By $B(x,r)$ we mean the open ball centered at $x\in S$, and radius $r>0$ with respect to the metric $d$. The associated left Haar measure $dx$ on $S$ is given by
$$dx=a^{-Q-1}dXdZda,$$
where $dX,\:dZ,\:da$ are the Lebesgue measures on $\mathfrak{v},\:\mathfrak{z},\:A$ respectively. The left Haar measure or volume of the open ball $B(x,r)$ in $S$ satisfies the following estimate
\begin{equation}\label{ballvol}
     |B(x,r)|\simeq
     \begin{cases}
     r^{l},\hspace{0.5cm}\text{for all}\:\:r< 1\\
 e^{2\rho r},\hspace{0.5cm}\text{for all}\:\:r\geq 1.   
\end{cases}
%\frac{\pi^{\frac{l}{2}}}{2^{k-1}Q\Gamma(l/2)}
\end{equation}
We mention that the right Haar measure on $S$ is $a^{-1}dXdZda$ and hence $S$ is not unimodular with the modular function $\Delta(X,Z,a)=a^{-Q}$. The functions which depend only on the distance from the identity element $e$ are called radial functions. Thus, a function $f_0$  on $[0,\infty)$ can be extended to a radial function $f$ on $S$ via the following obvious way
$$f(x)=f_0(d(e,x)),\,\,\,x\in S.$$
The convolution of two suitable functions $f$ and $g$ on $S$ is given by
$$f\ast g(x)=\int_Sf(y)g(y^{-1}x)\:dx=\int_Sf(xy)g(y^{-1})\:dx.$$
It is worth mentioning that the convolution is commutative for radial functions.
 Finally, we end the subsection by recording the so-called Kolmogorov inequality in our setting, which will be utilized in Section \ref{fsineq} while proving Theorem \ref{fsna}. Although the proof of this inequality is standard, we include it here for the sake of completeness. 
 \begin{prop}
 \label{kolmogorov}
     Let $T:L^1(S)\rightarrow L^{1,\infty}(S)$ be a bounded sublinear operator. Then for $f\in L^1(S)$, a ball $B$ in $S$, and $0<q<1$, we have 
     $$\int_{B}|Tf(x)|^qdx\leq \frac{\|T\|^q_{L^1\rightarrow L^{1,\infty}}}{1-q}|B|^{1-q}\|f\|_{L^1(S)}^p.$$
 \end{prop}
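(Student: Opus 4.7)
The plan is to proceed by the standard layer cake (distribution function) representation together with an optimal splitting of the level set parameter. First I would write
\[
\int_{B}|Tf(x)|^{q}\,dx = q\int_{0}^{\infty}\lambda^{q-1}\,\bigl|\{x\in B:|Tf(x)|>\lambda\}\bigr|\,d\lambda,
\]
and then estimate the level set in two different ways: trivially by $|B|$, and by the weak-type bound
\[
\bigl|\{x\in B:|Tf(x)|>\lambda\}\bigr| \le \bigl|\{x\in S:|Tf(x)|>\lambda\}\bigr| \le \frac{\|T\|_{L^{1}\to L^{1,\infty}}\,\|f\|_{L^{1}(S)}}{\lambda}.
\]

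Next I would split the $\lambda$-integral at the threshold where the two bounds coincide, namely
\[
\lambda_{0} := \frac{\|T\|_{L^{1}\to L^{1,\infty}}\,\|f\|_{L^{1}(S)}}{|B|},
\]
using the bound $|B|$ on $(0,\lambda_{0})$ and the weak-type bound on $[\lambda_{0},\infty)$. This gives
\[
\int_{B}|Tf(x)|^{q}\,dx \le q|B|\int_{0}^{\lambda_{0}}\lambda^{q-1}\,d\lambda + q\,\|T\|_{L^{1}\to L^{1,\infty}}\,\|f\|_{L^{1}(S)}\int_{\lambda_{0}}^{\infty}\lambda^{q-2}\,d\lambda,
\]
where the second integral is finite precisely because $q<1$. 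Evaluating both pieces and substituting the value of $\lambda_{0}$, the two contributions combine to
\[
|B|^{1-q}\bigl(\|T\|_{L^{1}\to L^{1,\infty}}\,\|f\|_{L^{1}(S)}\bigr)^{q}\Bigl(1+\tfrac{q}{1-q}\Bigr) = \frac{\|T\|_{L^{1}\to L^{1,\infty}}^{q}}{1-q}\,|B|^{1-q}\,\|f\|_{L^{1}(S)}^{q},
\]
which is exactly the claimed estimate (correcting what appears to be a typographical $p$ in place of $q$ in the stated exponent on $\|f\|_{L^1(S)}$).

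There is no real obstacle here: the argument uses nothing beyond Fubini for the layer cake formula, the definition of $L^{1,\infty}$, and the convergence of $\int\lambda^{q-2}d\lambda$ near infinity (which forces the hypothesis $q<1$ and produces the factor $1/(1-q)$ that blows up as $q\uparrow 1$). No feature of the Harmonic $NA$ group geometry enters; the proof is identical to the Euclidean one and works verbatim on any measure space, which is why the authors merely record it for completeness.
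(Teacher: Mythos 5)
Your proof is correct and is essentially the paper's own argument: the same layer cake representation, the same two bounds on the level set, and the same choice of splitting point $\lambda_0=\|T\|_{L^1\to L^{1,\infty}}\|f\|_{L^1(S)}/|B|$ (the paper calls it $K$), yielding the factor $1+\tfrac{q}{1-q}=\tfrac{1}{1-q}$. You are also right that the exponent $p$ on $\|f\|_{L^1(S)}$ in the statement is a typo for $q$.
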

 \begin{proof}
     It is well-known that 
     $$\int_{B}|Tf(x)|^qdx=q\int_0^{\infty}\lambda^{q-1}|\{x\in B: |Tf(x)|>\lambda\}|d\lambda.$$
     Letting $K>0$ be a constant to be specified later, we decompose the above integral as 
     $$\int_0^K\lambda^{q-1}|\{x\in B: |Tf(x)|>\lambda\}|d\lambda+\int_K^{\infty}\lambda^{q-1}|\{x\in B: |Tf(x)|>\lambda\}|d\lambda ,$$
     which, in view of the hypothesis on $T$, is dominated by 
     \begin{align*}
         &|B|\int_0^K\lambda^{q-1}d\lambda+\|T\|^q_{L^1\rightarrow L^{1,\infty}}\|f\|_1\int_K^{\infty}\lambda^{q-2}d\lambda\\
         &= \frac{|B|}{q}K^{q}+\frac{\|T\|_{L^1\rightarrow L^{1,\infty}}}{1-q}\|f\|_1K^{q-1}.
     \end{align*}
     Now taking $K= |B|^{-1}\|f\|_1\|T\|_{L^1\rightarrow L^{1,\infty}}$, from the above observation, we obtain 
     $$\int_{B}|Tf(x)|^qdx\leq \left(1+\frac{q}{1-q}\right)\|T\|_{L^1\rightarrow L^{1,\infty}}^q\|f\|_1^q,$$
     from which, the result follows.
 \end{proof}
\subsection{Spherical functions.}\label{spherical} The spherical analysis on $S$ was initiated in \cite{DR1, Ric} and it is quite similar to that of the rank one symmetric space. In fact, it can be placed under the general framework of Jacobi analysis of Koornwinder \cite{Koo}. The spherical function $\vp_{\la}$ corresponding to $\la\in\C$, is the unique radial eigenfunction of the Laplace-Beltrami operator $\mathcal{L}$ on $S$ such that  
\begin{align*}
&\mathcal{L}\vp_{\la}=-(\la^2+\rho^2)\vp_{\la};\\
&\vp_{\la}(e)=1.
\end{align*}
The explicit expression of $\vp_{\la}$ can be found in \cite[(2.1), (2.34)]{ADY}. We now list down some properties of spherical functions which will be utilized later.
\begin{enumerate}
\item $\vp_{\la}=\vp_{-\la}$; 
\item $|\vp_{\la}|\leq\vp_{i\Im\la}$;
\item $\varphi_{\la}$ is a strictly positive function whenever $\la$ is purely imaginary or $\la=0$;
\item The following asymptotic behavior of spherical functions $\vp_{\la}$, whenever $\Im\la<0$, is known \cite[(2.7)]{ADY}
\begin{equation}\label{sph_asymp}
\lim_{d(x,e)\to\infty}e^{(-i\la+\rho)d(x,e)}\vp_{\la}(x)=\textbf{c}(\la),
\end{equation}
where $\textbf{c}(\la)$ is the Harish-Chandra \textbf{c}-function which has neither zero nor pole in the region $\Im\la<0$.
\end{enumerate}
We will now write spherical functions on $S$ as a particular case of Jacobi functions. A Jacobi function $\phi^{(\sigma,\tau)}_{\la}$, with $\sigma\geq\tau>-1/2$, $\la\in\C$, is defined by  
$$\phi^{(\sigma,\tau)}_{\la}(t)={}_2F_1\left(\frac{1}{2}(\sigma+\tau+1-i\la),\frac{1}{2}(\sigma+\tau+1+i\la);\sigma+1;-\sinh^2t\right),\hspace{0.3cm}t>0.$$
Spherical functions are related to Jacobi function in the following way
\begin{equation}\label{spjac}
    \vp_{\la}(x)=\phi^{(\sigma,\tau)}_{2\la}(d(e,x)/2),\hspace{0.3cm}x\in S,
\end{equation}
with indices $\sigma=(m+k-1)/2$, and $\tau=(k-1)/2$. For a detailed account of Jacobi functions we refer to \cite{Koo}.
For $\Im\la<0$, we have the following asymptotic of the Jacobi functions \cite[(2.19), p.8]{Koo}: 
\begin{equation}\label{jacasymp}
    \phi^{(\sigma,\tau)}_{\la}(t)=\textbf{c}_{(\sigma,\tau)}(\la)e^{(i\la-\varrho)t}(1+o(1)),\hspace{0.3cm}\text{as}\,\,t\to\infty,
\end{equation}
where $\varrho=\sigma+\tau+1$ and $\textbf{c}_{(\sigma,\tau)}(\la)$ is an analogue of Harish-Chandra  \textbf{c}-function which has neither zero nor pole in the region $\Im\lambda<0$. The Jacobi function $\phi^{(\sigma,\tau)}_{\la}$ is a smooth function on $[0,\infty)$ such that $\phi^{(\sigma,\tau)}_{\la}(0)=1$, and it satisfies the following differential equation
\begin{equation}\label{jacobi}
    \left(\frac{d^2}{dt^2}+\left((2\sigma+1)\coth{t}+(2\tau+1)\tanh{t}\right)\frac{d}{dt}+\la^2+\varrho^2\right)\phi^{(\sigma,\tau)}_{\la}(t)=0,\hspace{0.3cm}t\geq 0.
\end{equation}

For $\lambda \not = -i, -2i , \ldots,$ there is an  another solution $\Phi_{\lambda}^{(\sigma,\tau)}$ of \eqref{jacobi} on $(0,\infty)$ given by
\begin{align*}
   \Phi_{\lambda}^{(\sigma,\tau)}(t) = (2\cosh{t})^{i\lambda-\varrho} {}_2F_1\left( \frac{1}{2}(\varrho-i\lambda), \frac{1}{2}(\sigma-\tau+1-i\lambda);1-i\lambda;\cosh^{-2}t \right).
\end{align*}
Moreover, for $\lambda \in \C\setminus i \Z$, $\Phi_{\lambda}^{(\sigma,\tau)}$ and $\Phi_{-\lambda}^{(\sigma,\tau)}$ are two linearly independent solutions of \eqref{jacobi}. Therefore, $\phi^{(\sigma,\tau)}_{\lambda}$ can be written as a linear combination of $\Phi_{\lambda}^{(\sigma,\tau)}$ and $\Phi_{-\lambda}^{(\sigma,\tau)}$ in the following way
\begin{align*}
    \phi^{(\sigma,\tau)}_{\lambda}= \textbf{c}_{(\sigma,\tau)}(\lambda) \Phi_{\lambda}^{(\sigma,\tau)} + \textbf{c}_{(\sigma,\tau)}(-\lambda)\Phi_{-\lambda}^{(\sigma,\tau)}.
\end{align*}
We will use the following asymptotic estimates of $\Phi_{\lambda}^{(\sigma,\tau)} $, 
\begin{align}\label{Phi_inf}
    |\Phi_{\lambda}^{(\sigma,\tau)}(t) |\simeq e^{-(\Im\lambda+\varrho)t} , \quad \text{as $t\rightarrow \infty$,}
\end{align}
and also the following limiting behavior \cite[(8.12)]{SP}
\begin{align}\label{Phi_zero}
    |\Phi_{\lambda}^{(\sigma,\tau)} (t)| \simeq \begin{cases}
        t^{2\sigma} \quad\text{if $\sigma \not=0$}\\
        \log \frac{1}{t} \quad\text{if $\sigma =0$}
    \end{cases}\text{as}~~t\rightarrow 0.
\end{align}
\subsection{Gromov hyperbolic spaces}\label{geo} We now discuss a result, alluded to in the introduction, due to Chatterji and Niblo \cite{CN} regarding an estimate of the measure of the intersection of two balls. Before this, we recall the following definition from metric geometry \cite[Definition 4.2]{Kni}. 
\begin{defn}
    Let $\delta\geq 0$. A geodesic metric space is called $\delta$-hyperbolic if each side of a geodesic triangle is contained in the $\delta$-neighborhood of the two other sides. A geodesic metric space is called Gromov hyperbolic is a geodesic metric space if it is $\delta$-hyperbolic for some $\delta\geq 0$.
\end{defn}

As $|B(x,r)|\simeq e^{2\rho r}$, for $r\geq1$, the Harmonic $NA$ group $S$ is of purely exponential volume growth in the sense of Knieper \cite[Definition 2.2]{Kni} and hence $S$ is Gromov hyperbolic (see \cite[Theorem 4.8]{Kni}). In the proof of \cite[Proposition 14]{CN}, Chatterji and Niblo have shown the following which we put in the form of a lemma.
\begin{lem}
 Let $(X,\Tilde{d})$ be a $\delta$-hyperbolic space. Then for any $x,\,y\in X$, and $s\,t>0$
 $$B(c,r)\subseteq B(x,s)\cap B(y,t)\subseteq B(c,r+2\delta),$$
 where $r=\frac{1}{2}(s+t-\Tilde{d}(x,y))$, and $c$ is any point on any geodesic between $x$ and $y$, at distance $\frac{1}{2}(s-t+\Tilde{d}(x,y))$ from $x$.
\end{lem}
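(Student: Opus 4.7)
The plan is to prove the two set inclusions separately. The first inclusion $B(c,r) \subseteq B(x,s) \cap B(y,t)$ is elementary and needs only the triangle inequality: since $c$ lies on an $x$--$y$ geodesic at distance $\tilde d(x,c) = \frac12(s-t+\tilde d(x,y))$ from $x$, the complementary distance on the geodesic is $\tilde d(y,c) = \tilde d(x,y) - \tilde d(x,c) = \frac12(t-s+\tilde d(x,y))$. For any $z \in B(c,r)$ the triangle inequality then gives $\tilde d(x,z) < \tilde d(x,c) + r = s$ and $\tilde d(y,z) < \tilde d(y,c) + r = t$. The special form of $r$ and of $\tilde d(x,c)$ is engineered precisely so that these bounds meet the prescribed radii.

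The reverse inclusion $B(x,s) \cap B(y,t) \subseteq B(c, r+2\delta)$ is where Gromov hyperbolicity enters. Given $z \in B(x,s) \cap B(y,t)$, I would form the geodesic triangle with vertices $x,y,z$ (fixing any geodesic between each pair, so that one side contains $c$). The $\delta$-hyperbolicity hypothesis guarantees a point $c'$ on $[x,z] \cup [y,z]$ with $\tilde d(c,c') \leq \delta$. If $c' \in [x,z]$, the decomposition $\tilde d(x,c') + \tilde d(c',z) = \tilde d(x,z) \leq s$ together with the reverse triangle inequality $\tilde d(x,c') \geq \tilde d(x,c) - \delta$ yields
\[
\tilde d(c',z) \leq s - \tilde d(x,c) + \delta = r + \delta,
\]
and hence $\tilde d(c,z) \leq \tilde d(c,c') + \tilde d(c',z) \leq r + 2\delta$. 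The case $c' \in [y,z]$ is handled symmetrically from $\tilde d(y,z) \leq t$, using $\tilde d(y,c) = \frac12(t-s+\tilde d(x,y))$.

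No step is genuinely difficult; the principal subtlety is bookkeeping, namely recognizing the algebraic identities $s - \tilde d(x,c) = r$ and $t - \tilde d(y,c) = r$ that make the $2\delta$ enlargement emerge cleanly from a single application of the hyperbolicity axiom. The degenerate parameter regimes $s+t < \tilde d(x,y)$ (both sides of the first inclusion are empty) or $|s-t| > \tilde d(x,y)$ (one ball contains the other) trivialize the statement and need no separate treatment beyond observing that the claimed choice of $c$ may be taken on an extended geodesic or at an endpoint.
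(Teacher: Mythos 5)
Your proposal is correct: the triangle-inequality computation gives $B(c,r)\subseteq B(x,s)\cap B(y,t)$, and the thin-triangle condition applied to the point $c$ on the side $[x,y]$ of the triangle $x,y,z$, together with the identities $s-\tilde{d}(x,c)=t-\tilde{d}(y,c)=r$, gives the reverse inclusion with the $2\delta$ enlargement. The paper does not prove this lemma itself but quotes it from Chatterji--Niblo, and your argument is essentially the same standard one used there, so there is nothing to flag beyond the minor bookkeeping of strict versus non-strict inequalities for open balls, which propagates harmlessly.
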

As an immediate consequence, we have the following estimate of the measure of the intersection of two balls.
\begin{prop}\label{indira}
For $x,\,y\in S$ and $s,\,t>0$, we have
\begin{equation}\label{indirac}
|B(x,s)\cap B(y,t)|\simeq e^{\rho(s+t-{d}(x,y))}.  \end{equation}
\end{prop}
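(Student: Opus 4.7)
The plan is to combine the Chatterji--Niblo sandwich lemma quoted just above the proposition with the explicit volume estimate \eqref{ballvol}. The first step is to observe that, since $|B(x,r)| \simeq e^{2\rho r}$ for $r \geq 1$, the space $S$ has purely exponential volume growth in the sense of Knieper, and hence by \cite[Theorem 4.8]{Kni} is Gromov hyperbolic; that is, $\delta$-hyperbolic for some $\delta \geq 0$. This puts us in a position to apply the preceding lemma directly.

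Setting $r := \tfrac{1}{2}(s + t - d(x,y))$, which is positive precisely when the intersection $B(x,s) \cap B(y,t)$ is nonempty, and choosing a point $c$ on a geodesic from $x$ to $y$ as in the lemma, I would obtain
$$B(c, r) \;\subseteq\; B(x, s) \cap B(y, t) \;\subseteq\; B(c, r + 2\delta).$$
Taking left Haar measures and applying \eqref{ballvol} then yields, in the main regime $r \geq 1$,
$$e^{2\rho r} \;\lesssim\; |B(c, r)| \;\leq\; |B(x,s) \cap B(y,t)| \;\leq\; |B(c, r+2\delta)| \;\lesssim\; e^{2\rho(r + 2\delta)} \;=\; e^{4\rho\delta}\, e^{2\rho r},$$
which is exactly the claimed bound $e^{\rho(s+t-d(x,y))}$, with the factor $e^{4\rho\delta}$ absorbed into the implicit constant.

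The only delicate point concerns the regime of small $r$, where \eqref{ballvol} gives $|B(c, r)| \simeq r^l$ rather than $e^{2\rho r}$, so the two branches of the volume formula must be matched to $e^{2\rho r}$ on a bounded range. Since $r^l$, $e^{2\rho r}$, and $(r+2\delta)^l$ are all bounded above and below by positive constants on any bounded interval of $r$, one can close the two-sided estimate by absorbing these constants into the $\simeq$. I expect this bookkeeping, rather than any substantive difficulty, to be the only nontrivial aspect of turning the sandwich lemma into the quantitative equivalence \eqref{indirac}.
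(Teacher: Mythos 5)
Your argument is exactly the paper's: the authors obtain the proposition as an immediate consequence of the Chatterji--Niblo sandwich lemma (after noting, via Knieper, that the purely exponential volume growth $|B(x,r)|\simeq e^{2\rho r}$ makes $S$ Gromov hyperbolic) combined with the volume estimate \eqref{ballvol}, which is precisely your computation in the regime $r\geq 1$. The small-$r$ bookkeeping you flag is not addressed in the paper either --- indeed the two-sided bound genuinely degenerates as $s+t-d(x,y)\to 0^{+}$, where the lens volume tends to $0$ while $e^{\rho(s+t-d(x,y))}$ stays bounded below, so your closing remark (that $r^{l}$ is bounded below on such intervals) cannot repair it --- but this shared implicit caveat aside, your proposal coincides with the paper's proof.
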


  \section{Weighted $L^p$-boundedness}\label{wlp}
 
 In this section, we prove weighted $L^p$-boundedness for the centered Hardy-Littlewood maximal operator with weights from the admissible $A_p$ class for $S$, which will be presented here more rigorously. The methods are more or less based on the ideas developed in \cite{ORR,AO}. Additionally, we give a variety of examples of weights and show how they are used to demonstrate different facets of the results we prove. Let us start by recalling a few terminologies. 
 
For each $r>0$, let $A_r$ stand for the ball averaging operator, which, when written in terms of the group convolution, takes the form
	\begin{equation*}
	    A_rf(x)=\frac{1}{|B(x,r)|}\int_{B(x,r)}|f(y)|\,dy=\frac{1}{|B(x,r)|} |f|\ast\mbox{\large$\chi$}_{B(e,r)}(x),~~x\in S.
     \end{equation*}
 Recall that the local and large-scale variants of $M$ given by 
	$$M^{0}f(x)=\sup_{0<r\leq T}A_rf(x)\quad\quad  \text{and} \quad\quad M^{\infty}f(x)=\sup_{r> T}A_rf(x),$$
 where $T>0$ is a finite number. We begin by noting that the $L^p$-mapping properties of the aforementioned operators do not depend on the choice of $T$. This follows easily from the fact that for any $0<t_0<t_1$, we have 
 \begin{equation*}
     \sup_{t_0<r<t_1}A_rf\leq \frac{1}{|B(e,t_0)|}|f|\ast\mbox{\large$\chi$}_{B(e,\ t_1)},
     \end{equation*}
 and $\mbox{\large$\chi$}_{B(e,t_1)}$ is an integrable function on $S.$ Nevertheless, for our convenience, we take $T=2$ throughout this article. 
 Now, as already mentioned in the introduction, for the weighted $L^p$-boundedness of the local variant $M^0$, one needs to assume that the weights have to satisfy the $A_{p, loc}(S)$ condition (\ref{aploc}). In order to handle the other part, namely $M^{\infty}$, we first proceed to see what condition on weights is necessary for $M$ to be bounded on weighted $L^p$ spaces. Let $w$ be a weight on $S$ such that $M$ is bounded on $L^p(w)$ with $1<p<\infty$. Then from the definition of product measure, it is clear that
 \begin{align*}
     \mathds{1}\otimes w(\{(x,y)\in E\times F: d(x,y)<N\}) =\int_Fw(E\cap B(x,N))\,dx.
 \end{align*}
 The integral on the right-hand side  of the above equation is equal to
 \begin{align*}
     &|B(e, N)|\int_{F}A_{N}(\Chi_{E})(x)w(x)\,dx\\
     & \lesssim e^{2\rho N} \int_{F} M(\Chi_E)(x)w(x)\,dx\\
     & \leq e^{2\rho N} \left(\int_{F} M(\Chi_E)^p(x)w(x)\,dx\right)^{\frac1p}\left(\int_{F} w(x)\,dx\right)^{\frac1{p'}}\\
     &\leq e^{2\rho N} \|M(\Chi_E)\|_{L^p(w)} w(F)^{p'},
 \end{align*}
 where in the second last step we have used H\"older's inequality. Now, using the $L^p(w)$-boundedness of $M$, it follows that
 \begin{align}\label{neceap}
     \mathds{1}\otimes w(\{(x,y)\in E\times F: d(x,y)<N\}) \lesssim e^{2\rho N} w(E)^{\frac{1}{p}}w(F)^{1-\frac{1}{p}},
 \end{align}
 for all integers $N\geq 1$, and measurable sets $E, F\subseteq S$.  However, as we shall see later in this section, the condition above is not sufficient for proving the weighted $L^p$-boundedness of $M$; see Example \ref{growthnec}. Nevertheless, assuming slightly less growth on $e^{2\rho}$, we define $A_p$ weights as follows, which proves to be sufficient for weighted estimates. 
 \subsection{Admissible $A_p$ weights}In what follows, we fix $1<p<\infty.$ 
 \begin{defn}\label{defn_A_p}(Admissible $A_p$ weight)
Let $\beta\in(0,1)$, $\beta\leq\alpha<p$. We say that a weight $w$ is an admissible $A_p$ weight with parameters $(\alpha,\beta)$ if 
	\begin{enumerate}
		\item $w\in A_{p, loc}(S)$, i.e., 
		$$\sup_{r(B)\leq 2}\left(\frac{1}{|B|}\int_{B}w(x)dx\right)\left(\frac{1}{|B|}\int_{B}w(x)^{-\frac{1}{p-1}}dx\right)^{p-1}<\infty,$$ 
		\item (Large-scale condition)  for every $N\in\N$, the following holds 
		$$\mathds{1}\otimes w\left( \left\{ (x,y)\in E\times F: d(x,y)<N\right\} \right) \lesssim e^{2\beta \rho N} w(E)^{\frac{\alpha}{p}}w(F)^{1-\frac{\alpha}{p}} $$
		for any pair of measurable sets $E,F\subseteq S.$ 
	\end{enumerate}
\end{defn}
After a moment's thought staring at the above definition, it may appear that the large-scale condition can be challenging to test for a given weight. We, therefore, offer adequate requirements for weights to fall within the admissible $A_p$ class with specific parameters. It will be then clear that the size of the class of admissible $A_p$ weights is rather substantial. In this context, we first note from the calculation above that 
\begin{align}
\label{pm-int}
         \int_{F}A_{N}(\Chi_{E})(x)w(x)dx=\frac{1}{|B(x,N)|}\mathds{1}\otimes w(\{(x,y)\in E\times F: d(x,y)<N\}).
     \end{align}         
    This interesting observation allows us to restate the large-scale condition in terms of the averaging operator, which is very useful in certain cases. Given $j\in \mathbb{N}$, we write $$\Omega_j:=B(e,j)\setminus B(e,j-1).$$ Thus, $S=\cup_{j=1}^\infty\Omega_j $ provides a decomposition of $S$ into annular regions, which will be beneficial in estimating several things in our proofs. It follows from Proposition (\ref{indira}) that 
    \begin{equation}\label{annular}
        |\Omega_j\cap B(x,N)|\simeq e^{\rho(N+j-d(e,x))},
    \end{equation}
 for all $x\in S$; $j,\, N\in\N$.   We now provide our first criteria.
\begin{prop}\label{easy_check}
    Let $w$ be a weight on $S$. Assume that there exists $\eta<1$ such that for any $i,j,N\in\N$ with $|i-j|\leq N $, we have
    \begin{equation}\label{check_examp}
        w\left(  \Omega_i\cap B(x,N)\right)\leq C e^{\rho (N+i-j)(p-\eta)}e^{2\rho N\eta} w(x),
    \end{equation}
    for almost every $x\in \Omega_j$. Then $w$ satisfies the large-scale condition in the definition of admissible $A_p$ class with parameters $(\frac{p}{p+1-\eta}, \frac{p}{p+1-\eta}).$
\end{prop}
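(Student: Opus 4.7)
The plan is to establish the large-scale condition in Definition \ref{defn_A_p} by first decomposing $E$ and $F$ along the annuli $\{\Omega_j\}_{j\in\N}$, then producing two complementary bounds on each block, interpolating between them, and summing up. To this end, I would write
\[
I \,:=\, \mathds{1}\otimes w(\{(x,y)\in E\times F : d(x,y)<N\}) \,=\, \sum_{i,j} I_{i,j},
\]
where $I_{i,j}:=\int_{E\cap\Omega_j}w(F\cap\Omega_i\cap B(x,N))\,dx$, and observe that the triangle inequality forces $I_{i,j}=0$ unless $|i-j|\lesssim N$. For such pairs the hypothesis \eqref{check_examp}, applied pointwise for $x\in E\cap\Omega_j$ and then integrated, yields
\[
I_{i,j}\,\le\, C\,e^{\rho(N+i-j)(p-\eta)}\,e^{2\rho N\eta}\,w(E\cap\Omega_j),
\]
while Fubini combined with \eqref{annular} (noting that $y\in F\cap\Omega_i\subseteq\Omega_i$ gives $|E\cap\Omega_j\cap B(y,N)|\lesssim e^{\rho(N+j-i)}$) supplies the dual bound
\[
I_{i,j}\,\le\, C\,e^{\rho(N+j-i)}\,w(F\cap\Omega_i).
\]

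Next I would interpolate via $I_{i,j}\le I_{i,j}^{\theta}I_{i,j}^{1-\theta}$ with the choice $\theta:=1/(p+1-\eta)$. A direct computation shows this particular $\theta$ simultaneously (i) cancels the coefficient of $(i-j)$ in the combined exponent, (ii) makes the coefficient of $N$ equal to $2p/(p+1-\eta)=2\beta$, and (iii) forces the weight exponents to be $\alpha/p$ and $1-\alpha/p$ with $\alpha=\beta=p/(p+1-\eta)$, which is exactly the shape demanded by the large-scale condition. Thus
\[
I_{i,j}\,\le\, C\,e^{2\rho\beta N}\, w(E\cap\Omega_j)^{\theta}\, w(F\cap\Omega_i)^{1-\theta}.
\]

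It then remains to sum over $(i,j)$ with $|i-j|\lesssim N$. Applying H\"older's inequality to the resulting double sum, with exponents $1/\theta$ and $1/(1-\theta)$, together with the crude count $\#\{i\in\N:|i-j|\le N\}\lesssim N$, produces $\sum_{|i-j|\lesssim N}w(E\cap\Omega_j)^\theta w(F\cap\Omega_i)^{1-\theta}\lesssim N\, w(E)^\theta w(F)^{1-\theta}$. The step I expect to require the most care is handling this polynomial factor $N$: since $\beta<1$ (which is forced by $\eta<1$), the factor can be absorbed into $e^{2\rho\beta N}$ at the price of an arbitrarily small enlargement of $\beta$, or alternatively removed entirely by perturbing $\theta$ slightly from $1/(p+1-\eta)$ in a sign-dependent manner to generate geometric decay in $|i-j|$ that is then summed by a convergent geometric series. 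Either way, the large-scale condition holds with the claimed parameters.
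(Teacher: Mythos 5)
Your setup is sound and, up to the interpolation step, parallels the paper: the same annular decomposition, the same two complementary block bounds (the hypothesis integrated over $E\cap\Omega_j$, and Fubini plus \eqref{annular} giving $e^{\rho(N+j-i)}w(F\cap\Omega_i)$), and your observation that the geometric mean with $\theta=1/(p+1-\eta)$ kills the $(i-j)$-exponent and produces exactly the exponents $2\rho\beta N$, $\alpha/p$, $1-\alpha/p$ is a correct reformulation of the paper's $\min$ of the two bounds. The gap is in the final summation. Because the interpolated bound has \emph{no} decay off the diagonal, summing over the band $|i-j|\lesssim N$ inevitably costs the factor $N$, and neither of your proposed fixes delivers the proposition as stated. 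Absorbing $N$ into $e^{2\rho\beta N}$ enlarges $\beta$ while leaving $\alpha=p/(p+1-\eta)$ fixed (the weight exponents $\alpha/p$, $1-\alpha/p$ are already locked in by your choice of $\theta$), so you would end with $\beta>\alpha$, which is not an admissible pair in Definition \ref{defn_A_p} and in any case is not the claimed parameter set. The sign-dependent perturbation $\theta\mapsto\theta\pm\delta$ does create geometric decay in $|i-j|$, but it simultaneously shifts the weight exponents to $\theta\pm\delta$ and the $N$-exponent to $(\theta\pm\delta)(p+\eta-1)+1$; after summing you get two terms of the form $e^{c_\pm\rho N}w(E)^{\theta\pm\delta}w(F)^{1-\theta\mp\delta}$, and such a term is \emph{not} dominated by the target $e^{2\rho\beta N}w(E)^{\theta}w(F)^{1-\theta}$ uniformly in $E,F$ (take $w(E)\gg w(F)$ or vice versa). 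So as written the argument proves a strictly weaker statement.

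What is missing is a balancing step in which the location of the split between the two regimes is chosen as a function of $w(E)$, $w(F)$ and $N$. This is precisely what the paper does: it keeps the two block bounds separate (takes their minimum rather than their geometric mean), splits the double sum at $i=j+\varepsilon$, sums a convergent geometric series on each side (using $p-\eta>0$ on one side and the exponent $\rho(j-i)$ on the other), arriving at $e^{2\rho N\eta}e^{\rho N(p-\eta)}e^{\rho\varepsilon(p-\eta)}w(E)+e^{\rho N}e^{-\rho\varepsilon}w(F)$, and then optimizes $\varepsilon\simeq\log\bigl(w(F)/w(E)\bigr)$ (up to the explicit affine correction in $N$); this optimization is exactly what converts the pair of ``pure'' bounds into the single bound with the exact exponents $w(E)^{1/(p+1-\eta)}w(F)^{1-1/(p+1-\eta)}$ and $e^{2\rho N p/(p+1-\eta)}$. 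Your interpolation-at-fixed-$\theta$ is the shadow of this optimization performed blockwise, which is why it reproduces the right exponents per block but cannot survive the summation; to close the argument you should either adopt the paper's $\varepsilon$-optimization, or equivalently optimize your split threshold (or a blockwise choice of $\theta$) over $w(E),w(F),N$ at the level of the summed estimate.
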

\begin{proof}
    In view of \eqref{pm-int}, given a pair of measurable subsets $E, F$ of $S$, we begin with estimating the following integral 
    $$I:=\int_{F}A_{N}(\Chi_{E})(x)w(x)dx.$$
    In order to do that, we decompose 
     $$I= \sum_{i,j\geq 1}\int_{F_i}A_N(\Chi_{E_j})(x)w(x)dx$$ where $E_j$ and $F_i$ are defined by  
     $$F_i=F\cap \Omega_i,~~~~E_j=E\cap \Omega_j.$$
     Now, denoting the integrals inside the last sum by $I_{j,i},$ we note that 
     \begin{align}\label{Ijidef1}
         I_{j,i}=\frac{1}{|B(e,N)|}\int_{F_i}\left(\int_{E_j\cap B(y,N)}dx\right)w(y)dy.
     \end{align}
     But using the Proposition \ref{indira}, we see that, given $y\in F_i$, 
     $$|E_j\cap B(y,N)|\lesssim  e^{\rho (N+j-i)},$$
which then yields 
     \begin{align}\label{Iji1}
         I_{j,i}\lesssim e^{-2\rho N}e^{\rho(N+j-i)}w(F_i).
     \end{align}
     Again, following a simple change of order of integration in \eqref{Ijidef1}, we see that 
     \begin{align*}
         I_{j,i}=\frac{1}{|B(e,N)|}\int_{E_j}\left(\int_{F_i\cap B(x,N)}w(y)dy\right)dx.
     \end{align*}
     Then, using the hypothesis, we get the following estimate 
     \begin{align*}
         I_{j,i} \lesssim e^{-2\rho N}e^{\rho(N+i-j)(p-\eta)}e^{2\rho N\eta} w(E_j),
     \end{align*}
     whence we obtain 
     \begin{align*}
         I_{j,i}\lesssim e^{-2\rho N} \min \left(e^{\rho (N+j-i)}w(F_i),e^{\rho (N+i-j)(p-\eta)}e^{2\rho N\eta} w(E_j)\right).  
     \end{align*}
     Therefore, 
     \begin{align*}
         I \lesssim  e^{-2\rho N}\sum_{i,j} \min \left(e^{\rho (N+j-i)}w(F_i),e^{\rho (N+i-j)(p-\eta)}e^{2\rho N\eta}w(E_j)\right).
     \end{align*}
     In order to estimate the above sum, we take some $\varepsilon\in\R$ to be specified later and decompose the above sum into two parts: $i<j+\varepsilon$ and $i\geq j+\varepsilon$ which allow us to dominate the concerned sums by 
     \begin{align*}
        &e^{2\rho N\eta} e^{\rho N(p-\eta)} \sum_{\substack{i,j\geq0\\ i<j+\varepsilon}} e^{\rho (i-j)(p-\eta)} w(E_j) +\sum_{\substack{i,j\geq0\\ i\geq j+\varepsilon}} e^{\rho (N+j-i)}w(F_i)\\
        &\leq e^{2\rho N\eta} e^{\rho N(p-\eta)}\sum_{j\geq0}\sum_{i<j+\varepsilon}e^{\rho (i-j)(p-\eta)} w(E_j)+ \sum_{i\geq0} \sum_{j\leq i-\varepsilon}e^{\rho (N+j-i)}w(F_i)\\
        &\leq e^{2\rho N\eta} e^{\rho N(p-\eta)} \sum_{j\geq0} e^{\rho (j+\varepsilon-j)(p-\eta)} w(E_j)+ e^{\rho N}\sum_{i\geq0}e^{\rho (i-\varepsilon-i)}w(F_i)\\
        &\leq e^{2\rho N\eta} e^{\rho N(p-\eta)}e^{\rho \varepsilon(p-\eta)} w(E)+ e^{\rho N}e^{-\rho \varepsilon} w(F)
     \end{align*}
     which, after choosing $$\varepsilon= \frac{2\log_{a}((w(F)/w(E))}{p-\eta+1}-N\frac{p+\eta-1}{p-\eta+1}$$ with $a=e^{2\rho }$, is dominated by a constant (depending on $p$ and $\eta$) multiple of $$e^{2\rho N\frac{p}{p+1-\eta}}w(E)^{\frac{1}{p+1-\eta}}w(F)^{1-\frac{1}{p+1-\eta}}.$$
     Hence, we see that 
     \begin{align*}
         &\mathds{1}\otimes w(\{(x,y)\in E\times F: d(x,y)<N\}) \\
         &=|B(e,N)|\int_{F}A_{N}(\Chi_{E})(x)w(x)dx\\
         &\lesssim e^{2\rho N\frac{p}{p+1-\eta}}w(E)^{\frac{1}{p+1-\eta}}w(F)^{1-\frac{1}{p+1-\eta}},
     \end{align*}
     proving the proposition.
\end{proof}
The previous result illustrates how, given a weight, we may examine the large-scale condition by observing the behavior of the weight in annular regions. However, the one limitation we do have is that this can only be used to provide examples of admissible $A_p$ weights with specific parameters $(\alpha, \beta)$ such that $\alpha=\beta.$ So, to accommodate the case $\beta<\alpha$, we provide the following result. For this purpose, we introduce a discrete variant of maximal functions defined by 
\begin{align}
    M^{dis}g=\sup_{N\in \mathbb{N}}\ A_Ng,~\ M_s^{dis}g=M^{dis}(g^s)^{\frac1s}.
\end{align}
\begin{prop}\label{betalessalpha}
    Let $w$ be a weight on $S$, and let $s>1$ be such that
   \begin{align}\label{eqn_Msw<w}
       M^{dis}_s (w) \lesssim w.
   \end{align}Then for each $p\in(1,\infty)$, $w$ satisfies the large-scale condition in the definition of admissible $A_p$ class with parameters $\left(\frac{s'p}{s'+1}, \frac{s'}{s'+1}\right)$.
\end{prop}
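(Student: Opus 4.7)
The plan is to adapt the two-sided estimate used in the proof of Proposition \ref{easy_check}, keeping the ``geometric'' bound on $I_{j,i}$ coming from Proposition \ref{indira} but replacing the ``weight-side'' bound by one that exploits the hypothesis $M^{dis}_s w \lesssim w$ through H\"older's inequality. By \eqref{pm-int} together with $|B(e,N)| \simeq e^{2\rho N}$, it is enough to show, for arbitrary measurable $E, F \subseteq S$, that
$$I := \int_F A_N(\chi_E)(x)\,w(x)\,dx \;\lesssim\; e^{-2\rho N/(s'+1)}\,w(E)^{s'/(s'+1)}\,w(F)^{1/(s'+1)};$$
multiplying through by $|B(e,N)|$ then delivers precisely the required large-scale estimate with parameters $(\alpha,\beta) = \bigl(\tfrac{s'p}{s'+1},\,\tfrac{s'}{s'+1}\bigr)$.

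Following the previous proposition, I would decompose $I = \sum_{i,j\geq 1} I_{j,i}$ with $E_j = E\cap \Omega_j$, $F_i = F\cap \Omega_i$, and $I_{j,i} = \int_{F_i} A_N(\chi_{E_j})\,w\,dx$, and produce two competing bounds. The first is obtained exactly as in the proof of Proposition \ref{easy_check}: left-invariance of Haar measure gives $|B(x,N)| = |B(e,N)|$, and Proposition \ref{indira} yields
$$I_{j,i} \;\lesssim\; e^{-\rho N + \rho(j-i)}\,w(F_i).$$

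For the second estimate I would swap the roles of $E_j$ and $F_i$ by Fubini to write $I_{j,i} = \int_{E_j} A_N(\chi_{F_i} w)(y)\,dy$, and then apply H\"older's inequality with exponents $s'$ and $s$ inside the averaging operator:
$$A_N(\chi_{F_i} w)(y) \le A_N(\chi_{F_i})(y)^{1/s'}\, A_N(w^s)(y)^{1/s} \le A_N(\chi_{F_i})(y)^{1/s'}\, M^{dis}_s w(y) \lesssim A_N(\chi_{F_i})(y)^{1/s'}\, w(y),$$
where the last inequality is precisely \eqref{eqn_Msw<w}. Bounding $A_N(\chi_{F_i})(y) \lesssim e^{\rho(i - j - N)}$ for $y\in E_j \subseteq \Omega_j$ by Proposition \ref{indira} once more then gives
$$I_{j,i} \;\lesssim\; e^{-\rho N/s' + \rho(i-j)/s'}\,w(E_j).$$

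Finally, I would sum the minima of the two bounds as in Proposition \ref{easy_check}. Fixing a threshold $\varepsilon \in \R$, using the first estimate on the range $i-j > \varepsilon$ and the second on $i-j \le \varepsilon$, the inner geometric series in $i-j$ are unconditionally convergent and produce
$$I \;\lesssim\; e^{-\rho N - \rho\varepsilon}\,w(F) \;+\; e^{-\rho N/s' + \rho\varepsilon/s'}\,w(E).$$
Balancing the two terms forces $\rho\varepsilon = \tfrac{s'}{s'+1}\log\tfrac{w(F)}{w(E)} - \tfrac{\rho N s'}{s(s'+1)}$. Substituting back and using the identity $\tfrac{1}{s'} + \tfrac{1}{s(s'+1)} = \tfrac{2}{s'+1}$, which is a direct consequence of $1/s + 1/s' = 1$, produces the claimed exponential factor $e^{-2\rho N/(s'+1)}$ together with the splitting $w(E)^{s'/(s'+1)} w(F)^{1/(s'+1)}$. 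The only obstacle I anticipate is bookkeeping: verifying that the second bound remains meaningful when the relevant ball intersections are empty (in which case the inequality becomes trivial) and carrying out the exponent arithmetic cleanly at the end.
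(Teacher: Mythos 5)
Your proposal is correct and follows essentially the same route as the paper: the paper isolates exactly your intermediate estimate as Lemma \ref{weak-l1-lem1} (with $M^{dis}_s w(E)$ in place of $w(E)$, the hypothesis $M^{dis}_s w\lesssim w$ being applied only afterwards), using the same annular decomposition, the same two competing bounds via Proposition \ref{indira} and H\"older with exponent $s$, and the same threshold-and-balance summation. The only difference is cosmetic: you invoke the pointwise hypothesis inside the H\"older step rather than after summing, which is equally valid.
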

This follows from the next lemma, which provides the weighted behavior of the averaging operator locally.   
\begin{lem}
 \label{weak-l1-lem1}
	    Let $w$ be a weight and $s>1$. Then, given $N\in\mathbb{N}$ and a pair of measurable subsets $E,F$ of $S$, we have 
     \begin{equation}\label{eqn_A_N_FS}
         \int_{F}A_N(\Chi_E)(x)w(x)\,dx\lesssim e^{-2\rho \frac{N}{s'+1}}w(F)^{\frac{1}{s'+1}}M_sw(E)^{\frac{s'}{s'+1}}.
     \end{equation}
	\end{lem}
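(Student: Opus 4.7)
The plan is to exploit the annular decomposition of the group, writing $S = \bigsqcup_{j \geq 1} \Omega_j$ with $\Omega_j := B(e,j)\setminus B(e,j-1)$, and to decompose $E = \bigsqcup_j E_j$, $F = \bigsqcup_i F_i$ with $E_j := E \cap \Omega_j$, $F_i := F \cap \Omega_i$. Denoting the left side of \eqref{eqn_A_N_FS} by $I$, one has $I = \sum_{i,j \geq 1} I_{i,j}$ where $I_{i,j} := \int_{F_i} A_N(\Chi_{E_j}) w$, and by the triangle inequality $I_{i,j} = 0$ unless $|i-j| \leq N$.

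For each such pair I would establish two competing bounds. Expressing $I_{i,j} = |B(e,N)|^{-1}\int_{F_i} |E_j \cap B(x,N)|\,w(x)\,dx$ and invoking Proposition \ref{indira} to obtain $|E_j \cap B(x,N)| \lesssim e^{\rho(N+j-i)}$ for $x \in F_i$ gives
\[
I_{i,j} \lesssim e^{\rho(j-i-N)} w(F_i) =: X_{i,j}.
\]
Swapping the order of integration rewrites the same quantity as $|B(e,N)|^{-1}\int_{E_j} w(F_i \cap B(y,N))\,dy$, to which I would apply H\"older's inequality with exponents $(s',s)$ and once more invoke Proposition \ref{indira}, now for $|F_i \cap B(y,N)|$, producing
\[
I_{i,j} \lesssim e^{\rho(i-j-N)/s'} M_s w(E_j) =: Y_{i,j}.
\]

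The key step is to split the double sum at an integer threshold $\tau$: use $X_{i,j}$ when $j-i < \tau$ and $Y_{i,j}$ when $j-i \geq \tau$. In each half, summing over $k := j-i$ reduces to a geometric series that collapses to its extremal term, leaving
\[
I \lesssim e^{\rho(\tau - N)} w(F) + e^{-\rho(\tau+N)/s'} M_s w(E).
\]
Minimising the right side over real $\tau$ (and rounding to the nearest integer, at the cost of an $e^\rho$ factor) balances the two terms; a direct calculation shows that at the optimum each term equals a constant multiple (depending on $s$) of the target $e^{-2\rho N/(s'+1)} w(F)^{1/(s'+1)} M_s w(E)^{s'/(s'+1)}$.

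The main subtlety will be the boundary regime in which the optimal $\tau$ falls outside $[-N+1,N]$; there one of the two sums is vacuous, but a direct comparison shows that the trivial estimates $I \leq w(F)$ or $I \leq M_s w(E)$ (the latter following from the pointwise inequality $|B(e,N)|^{-1} w(F\cap B(y,N)) \leq M w(y) \leq M_s w(y)$) already dominate the target in that regime. The decisive input throughout is the sharp intersection formula in Proposition \ref{indira}, which in turn rests on the purely exponential volume growth / Gromov hyperbolicity of the harmonic $NA$ group.
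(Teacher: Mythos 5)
Your proposal is correct and follows essentially the same route as the paper: decompose $E$ and $F$ into the annuli $\Omega_j$, bound each piece $I_{j,i}$ two ways (once via Proposition \ref{indira} applied to $|E_j\cap B(x,N)|$, once via Fubini plus H\"older with exponent $s$ and Proposition \ref{indira} applied to $|F_i\cap B(x,N)|$, which brings in $M_sw(E_j)$), then split the double sum at a threshold and optimize it, exactly as the paper does by taking the minimum of the two bounds and choosing $\varepsilon$ as in Proposition \ref{easy_check}. Your exponent bookkeeping (the bound $e^{\rho(i-j-N)/s'}M_sw(E_j)$) is in fact the corrected form of the paper's display \eqref{Iji2}, and your optimization reproduces the stated right-hand side, so no gap remains.
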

 \begin{proof}
     Keeping in mind the symbols employed in the proof of the Proposition \ref{easy_check}, we have 
     $$I_{j,i}\lesssim e^{-2\rho N}e^{\rho(N+j-i)}w(F_i).$$ 
     On the other hand, following a simple change of order of integration in (\ref{Ijidef1}), we can rewrite
     \begin{align*}
         I_{j,i}=\frac{1}{|B(e,N)|}\int_{E_j}\left(\int_{F_i\cap B(x,N)}w(y)\,dy\right)dx.
     \end{align*}
     Applying H\"older's inequality with exponent $s$, $I_{j,i}$ is dominated by 
     \begin{align*}
              \int_{E_j}\left(\int_{F_i\cap B(x,N)}dy\right)^{\frac{1}{s'}}\left(\int_{B(x,N)}w(y)^sdy\right)^{\frac1s}dx\lesssim e^{\rho (N+j-i)/s'}M^{dis}_s(w)(E_j),
     \end{align*}
     where in the last inequality we have used the estimate  $$|F_i\cap B(x,N)|\lesssim e^{\rho (N+i-j)}$$ whenever $x\in E_j$ (see Proposition \ref{indira}).  Therefore, 
     \begin{equation}
     \label{Iji2}
         I_{j,i}\lesssim  e^{\rho (N+j-i)/s'}M^{dis}_s(w)(E_j).
     \end{equation}
     So, combining the above two estimates of $I_{j,i}$, we obtain 
     $$I\lesssim  e^{-2\rho N}\sum_{i,j\geq 1} \min \left( e^{\rho (N+j-i)}w(F_i), e^{\rho (N+i-j)/s'}M^{dis}_s(w)(E_j)\right).$$
     At last, proceeding as in the proof of the Proposition \ref{easy_check}, we have 
     $$I \lesssim e^{-2\rho \frac{N}{s'+1}}w(F)^{\frac{1}{s'+1}}M^{dis}_sw(E)^{\frac{s'}{s'+1}}, $$
     completing the proof of this lemma.
 \end{proof}
 \textbf{\textit{Proof of the Proposition \ref{betalessalpha}}}: In view of \eqref{pm-int}, using the above lemma, we conclude that 
 $$\mathds{1}\otimes w\left( \left\{ (x,y)\in E\times F: d(x,y)<N\right\} \right) \lesssim e^{2\rho{N}\left( 1-\frac{1}{s'+1} \right)}w(F)^{\frac{1}{s'+1}}M^{dis}_sw(E)^{\frac{s'}{s'+1}},$$
 for any two measurable subsets $E$ and $F$ of $S.$ Now, the hypothesis on $w$, allows us to dominate the right-hand side of the above inequality by 
 $$ C_{s}e^{2\rho{N}\left(\frac{s'}{s'+1} \right)}w(E)^{\frac{s'}{s'+1}}w(F)^{\frac{1}{s'+1}},$$
 proving that the weight $w$ satisfies the large-scale condition with parameters $\alpha=\frac{s'}{s'+1}p$, and $\beta=\frac{s'}{s'+1}.$\qed

Now that we have the necessary apparatus, we can give various examples of weights in the following subsection.
 \subsection{Examples}\label{exmple}
 %Analogous to power weights in the Euclidean spaces, due to the exponential growth of the volume of balls of Harmonic $NA$ groups $S$, it is natural to hunt for examples of the form
 %$$w_{\gamma}(x):=   e^{2\rho  \gamma \, d(e,x)},~~x\in S$$ where $\gamma$ is a real number. 
 \begin{enumerate}
     \item\label{trivial_ex} \textit{(Trivial example)} We denote by $w_0$, the weight which is identically $1$. Then $w_0$  belongs to the class of admissible $A_p$ weights with parameters $\alpha=\frac{s'}{s'+1}p$, and $\beta=\frac{s'}{s'+1}$, for any $1<p<\infty.$
 Indeed, it is easy to see that $w_0\in A_{p, loc}(S).$ Now, to check the large-scale condition, recalling  the definition of $M_s$, we see that $$M_sw_0(x)=\left(\sup_{r>0}\frac{1}{|B(x,r)|}\int_{B(x,r)}1^sdx\right)^{\frac{1}{s}}=1=w_0(x), $$
 whence by the Proposition \ref{betalessalpha}, we are done.
 \vspace{.3cm}
 \item\label{blesa}  \textit{(Case: $\beta<\alpha$)} For $\gamma\in [-1, 0)$, we define  
 $$w_{\gamma}(x)=e^{2\rho\gamma d(e,x)},\hspace{0.3cm}x\in S.$$ 
 We  show that $w_\gamma$ is in the class of admissible $A_p$ weights with parameters $$\alpha=\frac{s'}{s'+1}p,\hspace{0.3cm} \beta=\frac{s'}{s'+1},$$ 
 for some $s>1$ depending on $\gamma$.  Now, as $w_{\gamma}^s=w_{\gamma s},$ to check the large-scale condition, in view of the Proposition \ref{betalessalpha}, it is enough to find some $s>1$ such that $\gamma s\in [-1,0)$ and $M^{dis}w_{\gamma s}\lesssim w_{\gamma s}$. This is achieved once we prove the following 
 \begin{equation}
 \label{claimwgama}
     M^{dis}w_{\gamma}\lesssim w_{\gamma},\hspace{0.3cm}\text{for all}\:\:\gamma\in [-1,0).
 \end{equation}
 If  $x \in \Omega_j$, then it can be shown that the set $B(x,N) \cap \Omega_i$ is empty unless $$|i-j| \leq N+1,$$ or equivalently
 \begin{equation}\label{i_j_N}
     i =j+N+1-2m, \quad \text{for some } m\in \{ 0,1,\ldots N+1\} .
 \end{equation}
 This follows from the observation that if  $ y\in B(x,N) \cap \Omega_i$, then we necessarily have  
\begin{align}\label{eqn_d(0,y)}
    i-1 \leq d(e,y) \leq d(x,y) + d(e,x)  \leq N+j.
\end{align}
Similarly, interchanging the role of $i$ with $j$, and $x$ with $y$ we get the other inequality.

Now, in order to prove \eqref{claimwgama}, it is enough to prove for all $N\in\mathbb{N}$ 
\begin{align*}
    \frac{1}{|B(x,N)|} \int_{B(x,N)} w_{\gamma}(y)\, dy\lesssim w_{\gamma}(x),~~a.e.\ x\in S.
\end{align*}
 Since for  $y \in \Omega_k$, $w_{\gamma}(y) = e^{2\rho \gamma d(e,y) } \simeq e^{2\rho \gamma k}$, for any $k\in \N$, we can write
 \begin{align*}
      &\frac{1}{|B(x,N)|} \int_{B(x,N)} w_{\gamma}(y) \,dy\\
      & =  \frac{1}{|B(x,N)|} \sum_{i=1}^{\infty}\int_{\Omega_i} \Chi_{B(x,N)}(y) w_{\gamma}(y) \,dy\\
      &= \frac{1}{|B(x,N)|} \sum_{m=0}^{N+1}\int_{\Omega_{j+N-2m}} \Chi_{B(x,N)}(y) w_{\gamma}(y) \,dy\quad \text{(as $x\in\Omega_j$, we use \eqref{i_j_N})}\\
      &\simeq \frac{1}{|B(x,N)|} \sum_{m=0}^{N+1}\int_{\Omega_{j+N-2m}} \Chi_{B(x,N)}(y) e^{2\rho \gamma(j+N-2m) } \,dy\\
      & = \frac{e^{2\rho \gamma (j+N)}}{|B(x,N)|} \sum_{m=0}^{N+1}  |{B(x,N)\cap \Omega_{j+N-2m}}|e^{- 4\rho \gamma m }.\\
 \end{align*}
 Using the estimate of  $|{B(x,N)\cap \Omega_{j+N-2m}}|$ from \eqref{annular}, we get
 \begin{align*}
      \frac{1}{|B(x,N)|} \int_{B(x,N)} w_{\gamma}(y) \,dy& \simeq \frac{e^{2\rho \gamma (j+N)}}{|B(x,N)|}\sum_{m=0}^{N+1}  e^{\rho (N+j+N-2m-j)} e^{- 4\rho \gamma m }\\
      & = \frac{e^{2\rho \gamma (j+N)}}{|B(x,N)|}\sum_{m=0}^{N+1}  e^{\rho (2N-2m)} e^{- 4\rho \gamma m }\\
       & \simeq {e^{2\rho \gamma (j+N)}}\sum_{m=0}^{N+1}    e^{- 2\rho (2\gamma+1) m },
 \end{align*}
 for all $x\in \Omega_j$. %Since $w(x) \simeq e^{2\rho \gamma j}$, for $x \in \Omega_j$, the proof of  \eqref{claimwgama} reduced to proving the following inequality 
 %\begin{align*}
    % e^{2\rho \gamma N}\sum_{m=0}^{N+1}    e^{- 2\rho (2\gamma+1) m } \leq C_\gamma,
 %\end{align*}
 %uniformly on $N\in \N$. 
 We first prove \eqref{claimwgama} for the case $2\gamma +1 > 0$, i.e., $-1/2\leq \gamma<0$. We recall that $w_{\gamma}(x) \simeq e^{2\rho \gamma j}$, for $x \in \Omega_j$, and so we have
     \begin{align*}
    \frac{1}{|B(x,N)|} \int_{B(x,N)} w_{\gamma}(y)\, dy&\simeq 
{e^{2\rho \gamma (j+N)}} \sum_{m=0}^{N+1}    e^{- 2\rho (2\gamma+1) m } \\
&\lesssim  e^{2\rho \gamma N} \frac{(N+1)(N+2)}{2} w_{\gamma}(x)\\
&\lesssim_{\gamma}w_{\gamma}(x),
 \end{align*}
 as $-1/2<\gamma<0$. This completes our proof of \eqref{claimwgama} for $\gamma\in [-1/2,0)$. We now take $-1 \leq 2\gamma+1 <0$, or equivalently $-1\leq \gamma <-1/2 $, then 
 \begin{align*}
     e^{2\rho \gamma N}\sum_{m=0}^{N+1}    e^{- 2\rho (2\gamma+1) m } =  e^{2\rho \gamma N}  \frac{ e^{- 2\rho (2\gamma+1) (N+2)}-1}{ e^{- 2\rho (2\gamma+1)  }-1}\simeq e^{-2N(\gamma+1)}. 
 \end{align*}
This concludes the proof of \eqref{claimwgama}.
\vspace{0.3cm}
\item \label{betaeqalpha} (\textit{Case: $\beta=\alpha$})  
 Let $$w_{p-1}(x)=e^{2\rho(p-1)d(e,x)},\hspace{0.3cm} x\in S.$$ We show that $w_{p-1}$ belongs to the admissible $A_p$-class with parameters $\alpha=\beta=1/2.$ Indeed, once again it is obvious that $w_{p-1}\in A_{p, loc}(S).$ To check the large-scale condition, let us fix $N\in \mathbb{N}$ and take $i,j \in \N \cup \{0\} $  with  $|i-j|\leq  N.$ Then for $x\in \Omega_j$, we write
\begin{align*}
    w(\Omega_i \cap B(x,N)) = \int_S \Chi_{\Omega_i \cap B(x,N)}(y) w (y) dy \lesssim e^{2\rho (p-1) i}   |\Omega_i \cap B(x,N)|. 
\end{align*}
Using the estimate of $|\Omega_i \cap B(x,N)|$ from \eqref{annular},  we  obtain,
\begin{align*}
     w (\Omega_i \cap B(x,N)) \lesssim  e^{2\rho (p-1) i} e^{\rho(N+i-j)}
\end{align*}
Rewriting the right-hand side as 
$$e^{\rho (2p-1) (N+i-j)} e^{-\rho(2p-1) N -\rho (i-j)}  e^{\rho(N+i-j)}e^{2\rho(p-1)j},$$ and simplifying in view of the fact that $w_{p-1}(x)\simeq e^{2\rho(p-1)j},$ for $x\in \Omega_j$, we get 
$$w (\Omega_i \cap B(x,N))  \lesssim e^{\rho (2p-1) (N+i -j) } e^{2\rho N(1-p)} w_{p-1} (x).$$
Thus, $w$ satisfies \eqref{check_examp} hypothesis of Proposition \ref{easy_check} with $\eta =1-p$. Therefore, $w_{p-1}$ belongs to the admissible $A_p$-class with parameters $\alpha=\beta= 1/2.$
\vspace{.3cm}

\item (\textit{Spherical functions as examples}) This case is devoted to showing certain spherical functions or more generally some Jacobi functions considering radial functions on $S$, constitute examples of admissible $A_p$ weights.  Keeping in mind the notations introduced in Section \ref{spherical},  let us define $$u(t) = \phi^{(\sigma,\tau)}_{i\kappa}(t), \quad \text{for } t\geq 0,$$ where $ \kappa = 2\rho (p-1) +\varrho$.
We claim that $u$ belongs to $A_p$ class with parameters $\alpha=\beta =1/2$. To see this, we recall from \eqref{jacasymp},
\begin{align*}
    u(t) \simeq e^{(\kappa -\varrho)t}= e^{2\rho(p-1)t},
\end{align*}
whence from the above case \eqref{betaeqalpha}, the claim follows. 
Let $$v(t)= \frac{t^{2\sigma}}{(1+t^{2\sigma})}\Phi^{(\sigma,\tau)}_{i\theta}(t) ,\,\,\,t>0,$$ where $\theta =-2\rho \gamma -\varrho$, and $\gamma\in[-1/2,0)$. Using the asymptotic estimates \eqref{Phi_inf} and \eqref{Phi_zero} of $\Phi^{(\sigma,\tau)}_{i\theta}$, it follows that
\begin{equation*}
    v(t) \simeq e^{-(\theta +\varrho)t}\simeq e^{2\rho\gamma t}.
\end{equation*} Then, from the case \eqref{blesa}, it follows that  $v$ is an admissible $A_p$ weight with parameters $$\alpha=\frac{s'}{s'+1}p,\hspace{0.3cm} \beta=\frac{s'}{s'+1},$$
for some $s >1$ (depending on $\gamma$). As mentioned in Section \ref{spherical}, any spherical function of $S$ is a  Jacobi function with specific parameters. The above discussion reveals that for every $p\in(1,\infty)$ there is a spherical function which is an admissible $A_p$ weight with parameters $(1/2,1/2)$. More generally, one can consider the following class of weights of the form 
\begin{equation}\label{generalweight}
    w(x) =\varphi_{i\gamma} (x) \eta(x),\,\,\,x\in S
\end{equation}
where $\gamma =(2p-1)\rho$, and $\eta$ is any positive bounded function with non-zero infimum. For example, one can take  $\eta (x) =e^{\frac{1}{1+d(e,x)}}.$ This kind of weight has been used in Dahlner \cite[p.8]{Dalhner} in the case of rank one symmetric spaces of noncompact type.
 \end{enumerate}
  
 \subsection{Weighted inequalities}

 \begin{lem}\label{lem_A_1_leb_est}
     Let $\beta\in(0,1)$, $\beta\leq\alpha<p$ and $w$ be an admissible $A_p$ weight with parameters $(\alpha, \beta)$. Then for each $\lambda>0$, $N\in \mathbb{N}$, there exists $\eta>0$ such that  
     \begin{equation}\label{A_1_f_est}
         w\left(\{A_N(A_1f)\geq \lambda\}\right) \lesssim  \sum_{k=0}^{N} e^{\rho (k-N)(1-\beta)\frac{p}{\alpha}} e^{2\rho \beta\frac{p}{\alpha}k}~w\left(\{A_2f> \eta\lambda e^{2\rho k}\}\right).
     \end{equation}
 \end{lem}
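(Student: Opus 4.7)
The plan is to decompose $A_1 f$ via a layer-cake on the level sets of $A_2 f$, truncated at the top level $\eta\lambda e^{2\rho N}$, and then control each piece with a weighted weak-type estimate on $A_N(\mathbf{1}_E)$ derived directly from the admissible $A_p$ large-scale condition. The first reduction is from $A_1 f$ to $A_2 f$: since $|B(y,1)|\simeq|B(y,2)|$ by local doubling, one has the pointwise bound $A_1 f(y)\lesssim A_2 f(y)$, hence $A_N(A_1 f)(x)\lesssim A_N(A_2 f)(x)$. Setting $F_k=\{A_2 f>\eta\lambda e^{2\rho k}\}$ for $k\ge 0$, the truncated layer-cake reads
\begin{align*}
A_2 f(y)\le \eta\lambda + C\eta\lambda\sum_{k=0}^{N-1}e^{2\rho k}\mathbf{1}_{F_k}(y) + A_2 f(y)\mathbf{1}_{F_N}(y).
\end{align*}
Averaging over $B(x,N)$ and choosing $\eta>0$ small, on the set $\{A_N(A_1 f)\ge\lambda\}$ at least one of the alternatives holds: \emph{(i)} $\sum_{k=0}^{N-1}e^{2\rho k}A_N(\mathbf{1}_{F_k})(x)\gtrsim 1/\eta$, or \emph{(ii)} $A_N(A_2 f\,\mathbf{1}_{F_N})(x)\gtrsim\lambda$.

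The central quantitative input I would establish next is a weighted Chebyshev bound. Using identity \eqref{pm-int} together with the large-scale condition in Definition \ref{defn_A_p}, for any measurable $E,F\subseteq S$,
\begin{align*}
\int_F A_N(\mathbf{1}_E)(x)\,w(x)\,dx \;\simeq\; e^{-2\rho N}\,(\mathbf{1}\otimes w)\bigl(\{(x,y)\in E\times F : d(x,y)<N\}\bigr) \;\lesssim\; e^{2\rho(\beta-1)N}\,w(E)^{\alpha/p}\,w(F)^{1-\alpha/p};
\end{align*}
choosing $F=\{A_N(\mathbf{1}_E)>t\}$ and applying Chebyshev then gives
\begin{align*}
w\{A_N(\mathbf{1}_E)>t\}\lesssim t^{-p/\alpha}\,e^{2\rho(\beta-1)Np/\alpha}\,w(E).
\end{align*}
For alternative \emph{(i)} I would pigeonhole with weights $\omega_k=c\,e^{-\rho(N-k)(1-\beta)}$, which are summable over $k=0,\dots,N-1$ precisely because $\beta<1$. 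Substituting $t=c\omega_k e^{-2\rho k}/\eta$ and $E=F_k$ into the Chebyshev bound, and simplifying via the identity $(N-k)(1-\beta)+2k-2N(1-\beta)=k(1+\beta)-N(1-\beta)$, reproduces exactly $\sum_{k=0}^{N-1}e^{\rho(k-N)(1-\beta)p/\alpha}e^{2\rho\beta kp/\alpha}w(F_k)$, i.e.\ the terms $k=0,\dots,N-1$ of the target sum.

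The main obstacle is alternative \emph{(ii)}: the tail $A_N(A_2 f\,\mathbf{1}_{F_N})(x)\gtrsim\lambda$ must be absorbed into the single $k=N$ term $e^{2\rho\beta Np/\alpha}w(F_N)$. Here I would exploit the pointwise estimate $A_2 f(y)\lesssim e^{2\rho N}A_{N+2}f(x)$ valid for every $y\in B(x,N)$, which follows from the inclusion $B(y,2)\subseteq B(x,N+2)$ together with $|B(y,2)|\simeq 1$ and $|B(x,N+2)|\simeq e^{2\rho N}$ (cf.\ Proposition \ref{indira}). On the subset where $A_{N+2}f(x)\lesssim\lambda$ this implies $A_N(A_2 f\,\mathbf{1}_{F_N})(x)\lesssim e^{2\rho N}\lambda\cdot A_N(\mathbf{1}_{F_N})(x)$, so \emph{(ii)} forces $A_N(\mathbf{1}_{F_N})(x)\gtrsim e^{-2\rho N}$, and the Chebyshev bound at $t\simeq e^{-2\rho N}$ with $E=F_N$ produces exactly the missing term $e^{2\rho\beta Np/\alpha}w(F_N)$. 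The complementary subset, where $A_{N+2}f(x)\gg\lambda$, is handled by iterating the same decomposition at dyadic scales of $\lambda$, using that for locally integrable $f$ the measure $w(\{A_2 f>t\})$ tends to zero as $t\to\infty$ so that the iteration telescopes. Throughout, Proposition \ref{indira} is indispensable: it is precisely the intersection-of-balls formula $|B(x,s)\cap B(y,t)|\simeq e^{\rho(s+t-d(x,y))}$ that yields the sharp exponent $e^{2\rho(\beta-1)N}$ in the weighted Chebyshev step and thereby the precise coefficients in the target sum.
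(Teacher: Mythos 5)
Your alternative (i) is, in substance, the paper's own argument: the decomposition into exponentially spaced level sets up to height $e^{2\rho N}$, the pigeonhole with geometric weights $e^{-\rho(N-k)(1-\beta)}$ (this is exactly the paper's choice $\gamma=(1-\beta)/2$), and the weighted Chebyshev bound obtained by combining \eqref{pm-int} with the large-scale condition are all there, and your exponent bookkeeping $k(1+\beta)-N(1-\beta)$ reproduces the stated coefficients correctly. The genuine gap is in alternative (ii), the tail term. Your argument only works on the set where $A_{N+2}f(x)\lesssim\lambda$; on the complementary set $\{A_{N+2}f\gg\lambda\}$ you invoke an ``iteration at dyadic scales of $\lambda$ that telescopes,'' but this is not carried out and, as stated, it is circular: bounding $w(\{A_{N+2}f>C\lambda\})$ is precisely the kind of weighted level-set estimate for a large-radius average that the lemma itself is meant to provide, and nothing in the admissible $A_p$ condition gives you such a bound for free, nor summability over the dyadic scales. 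The auxiliary claim that $w(\{A_2f>t\})\to 0$ as $t\to\infty$ is also unjustified for a general weight $w$ and merely locally integrable $f$.

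The paper sidesteps this obstacle with a purely geometric observation that requires no upper bound on $f$ near $x$: it runs the level-set decomposition on $A_1f$ (not $A_2f$), truncating at $E_\infty=\{A_1f\ge e^{2\rho N}\lambda\}$, and notes that $A_N(A_1f\,\Chi_{E_\infty})(x)\neq 0$ already forces $B(x,N)\cap E_\infty\neq\emptyset$; for any $y$ in this intersection the entire unit ball $B(y,1)$ lies in $G=\{A_2f\ge c_0e^{2\rho N}\lambda\}$, since $B(y,1)\subset B(z,2)$ for every $z\in B(y,1)$, whence $A_{N+1}(\Chi_{G})(x)\gtrsim e^{-2\rho N}$, and the same Chebyshev/large-scale step then yields exactly the $k=N$ term $e^{2\rho\beta Np/\alpha}\,w(G)$. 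This ``thickening'' from a pointwise bound on $A_1f$ to a bound on $A_2f$ on a whole unit ball is the reason the paper decomposes $A_1f$ rather than $A_2f$: if you insist on level sets of $A_2f$, the same trick only places a unit ball inside a level set of $A_3f$ or $A_4f$, which is not what the statement asserts. Replacing your step (ii) by this argument, while keeping your step (i), gives a complete proof essentially identical to the paper's.
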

 \begin{proof} Without loss of generality, we may assume that $f$ is non-negative. In order to deal with the exponential volume growth property of $S$, we decompose $A_1f$ as follows:
     $$A_1f=f\Chi_{E_{-1}}+\sum_{k=0}^NA_1f\Chi_{E_k}+A_1f\Chi_{E_{\infty}},$$ where for each non-negative integer $k$, $E_k$ is the sub-level sets for $A_1f$ defined by  
    \begin{align*}
        E_k& =\{x\in S: e^{2\rho (k-1)}\leq A_1f(x)<e^{2\rho k}\};\\
        E_{-1}&=\{x\in S: A_1f(x)\leq e^{-2\rho }\};\\
        E_{\infty}&= \{x\in S: A_1f(x)\geq e^{2\rho N}\}.
    \end{align*}
     Thus, we obtain the following bound
     $$A_1f\leq e^{-1}+\sum_{k=0}^Ne^{2\rho k}\Chi_{E_k}+A_1f\Chi_{E_{\infty}}$$ which, for any $N\in \mathbb{N}$, yields 
     \begin{equation*}
         A_N(A_1f)\leq e^{-1}+\sum_{k=0}^Ne^{2\rho k}A_N(\Chi_{E_k})+A_N(A_1f\Chi_{E_{\infty}}).
     \end{equation*}
      The inequality above allows us to observe that 
     \begin{equation}
     \label{lp-lem1eq1}
         w\left(\left\{A_N(A_1f)\geq 1\right\}\right) \leq w\left(\left\{ \sum_{k=0}^Ne^{2\rho k}A_N(\Chi_{E_k})\geq e^{-1}\right\}\right)+ w\left(\{A_N(A_1f\Chi_{E_{\infty}})\geq 1-2e^{-1}\}\right).
     \end{equation}
     We now estimate the last term in the expression above. 
    In order to do so, we first notice that  
    \begin{equation}
        \label{lp-eq-2}
        w\left(\{A_N(A_1f\Chi_{E_{\infty}})\neq 0\}\right)\leq w\left(\{x\in S: B(x, N)\cap E_{\infty}\neq \emptyset\}\right).
    \end{equation}
    We claim that 
     \begin{equation}\label{claim_A_N+1}
         w\left(\{x\in S: B(x, N)\cap E_{\infty}\neq \emptyset\}\right)\leq w\left(\left\{ A_{N+1}(\Chi_{\{A_2f\geq c_0e^{2\rho N}\}})\geq c_0e^{-2\rho N} \right\}\right)
     \end{equation}
     for some constant $c_0>0.$ To substantiate this claim, we observe that for any $y\in B(x, N)\cap E_{\infty}$, we have $$B(y,1)\subset B(x, N+1)\cap \{z\in S: A_2f(z)\geq c_0e^{2\rho N}\}.$$ Indeed, given such $y$, letting $z\in B(y,1)$, we note that 
     \begin{align*}
         A_{2}f(z)=\frac{1}{|B(z,2)|}\int_{B(z,2)}f(z')\,dz'\geq \frac{1}{|B(z,2)|}\int_{B(y,1)}f(z')\, dz'=\frac{|B(y,1)|}{|B(z,2)|}A_1f(y),
     \end{align*}
     as $B(y,1)\subset B(z,2)$. Now, as $y\in E_\infty$, we see that $$A_2(z)\geq c_1e^{2\rho N},$$ where $c_0:=\frac{|B(y,1)|}{|B(z,2)|}.$ But then, for any $x$ with $B(x, N)\cap E_{\infty}\neq \emptyset$, we obtain 
     \begin{align*}
         A_{N+1}(\Chi_{\{A_2f\geq c_0e^{2\rho r}\}})(x)&=\frac{1}{|B(x,N+1)|}\int_{B(x,N+1)}\Chi_{\{A_2f\geq c_0e^{2\rho r}\}}(z)\,dz\\&\geq \frac{1}{|B(x,N+1)|}\int_{B(y,1)}dz=\frac{|B(y,1)|}{|B(x,N+1)|}.
     \end{align*}
     Hence, the claim \eqref{claim_A_N+1} follows from the fact that $$\frac{|B(y,1)|}{|B(x,N+1)|}\simeq e^{-2\rho N}.$$   
     We now estimate the right-hand side of \eqref{claim_A_N+1}. We observe that  
     \begin{align}
     \label{lp-eq-3}
         w&\left(\{ A_{N+1}(\Chi_{A_2f\geq c_0e^{2\rho r}})\geq c_1e^{-2\rho N}\}\right)\nonumber\\&\lesssim e^{2\rho N}\int_{H_N}A_{N+1}(\Chi_{\{A_2f\geq c_0e^{2\rho r}\}})(z)w(z)\,dz,
     \end{align}
     where $$H_N:=\{x\in S: A_{N+1}(\Chi_{A_2f\geq c_0e^{2\rho r}})(x)\geq c_1e^{-2\rho N}\}.$$ 
     In order to estimate this, we use the hypothesis on $w.$ By writing $$G=\{x\in S:A_2f(x)\geq c_0e^{2\rho r}\},$$ we see that
     \begin{align*}
     \int_{H_N}A_{N+1}(\Chi_{G})(z)w(z)\,dz &=\frac{1}{|B(z,N+1)|} 
         \mathds{1}\otimes w(\{(x,y)\in G\times H_N: d(x,y)<N+1\})\\&\lesssim e^{2(\beta-1) \rho (N+1)} w(G)^{\frac{\alpha}{p}}w(H_N)^{1-\frac{\alpha}{p}}
     \end{align*}
     which, in view of \eqref{lp-eq-3}, shows that 
     $$w(H_N)\lesssim e^{2\rho N\beta\frac{p}{\alpha}}w(G).$$
     Hence, from \ref{lp-eq-2}, we have the estimate 
     \begin{equation}
         w\left(\{A_N(A_1f\Chi_{E_{\infty}})\neq 0\}\right)\lesssim e^{2\rho N\beta\frac{p}{\alpha}} ~w(\{A_2f\geq c_0e^{2\rho N}\}).
     \end{equation}
    Certainly, this takes care of the last term in \eqref{lp-lem1eq1}. Now, to estimate the second term in \eqref{lp-lem1eq1}, we first observe that 
     if $$\sum_{k=0}^Ne^{2\rho k}A_N(\Chi_{E_k})(x)\geq e^{-1},$$ for some $x\in S$, then we necessarily have 
     $$A_{N}(\Chi_{E_k})(x)\geq \frac{e^{2\rho \gamma}-1}{e^{2\rho (k+2)}}e^{2\rho (k-N)\gamma},$$
     for some $0\leq k\leq N.$ Otherwise, we would have 
     \begin{align*}
      e^{-1}\leq   \sum_{k=0}^Ne^{2\rho k}A_N(\Chi_{E_k})(x)\leq  \frac{e^{2\rho \gamma}-1}{e^{4\rho }}\sum_{k=0}^Ne^{2\rho (k-N)\gamma}=\frac{e^{2\rho \gamma}-1}{e^{4\rho }}\frac{e^{2\rho \gamma}-e^{-2\rho N}}{e^{2\rho \gamma}-1}<e^{-1},
     \end{align*}
     which is absurd. Therefore,  
     \begin{equation}
     \label{lp-eq-5}
         w\left( \left\{ \sum_{k=0}^Ne^{2\rho k}A_N(\Chi_{E_k})\geq e^{-1}\right\}\right)\leq \sum_{k=0}^Nw(F_k),
     \end{equation}
     where $F_k$ is defined by 
     $$F_k=\{x\in S: A_{N}(\Chi_{E_k})(x)\geq \frac{e^{2\rho \gamma}-1}{e^{2\rho (k+2)}}e^{2\rho (k-N)\gamma}\}.$$
     Now, from the definition of $F_k$, it is clear that 
     $$w(F_k)\leq \frac{e^{2\rho (k+2)}}{e^{2\rho \gamma}-1}e^{-2\rho (k-N)\gamma}\int_{F_k}A_{N}(\Chi_{E_k})(x)w(x)\,dx.$$ 
     But a simple calculation shows that 
     \begin{align*}
         \int_{F_k}A_{N}(\Chi_{E_k})(x)w(x)\,dx&=\frac{1}{|B(x,N)|}\mathds{1}\otimes w(\{(x,y)\in E_k\times F_k: d(x,y)<N\})\\
         & \lesssim e^{2\rho N(\beta-1)}w(E_k)^{\frac{\alpha}{p}}w(F_k)^{1-\frac{\alpha}{p}},   
     \end{align*}
     which yields 
     \begin{align*}
         w(F_k)\lesssim \frac{e^{2\rho (k+2)}}{e^{2\rho \gamma}-1}e^{-2\rho (k-N)\gamma}e^{2\rho N(\beta-1)}w(E_k)^{\frac{\alpha}{p}}w(F_k)^{1-\frac{\alpha}{p}}.
     \end{align*}
     That is, we have 
     \begin{align*}
        w(F_k)&\lesssim \left(\frac{1}{e^{2\rho \gamma}-1}\right)^{\frac{p}{\alpha}}e^{2\rho (k+2)\frac{p}{\alpha}}e^{-2\rho (k-N)\gamma\frac{p}{\alpha}}e^{2\rho N(\beta-1)\frac{p}{\alpha}}w(E_k) \\
        & = \left(\frac{1}{e^{2\rho \gamma}-1}\right)^{\frac{p}{\alpha}}e^{2\rho (k+2-k\gamma)\frac{p}{\alpha}} e^{2\rho N(\beta-1+\gamma)\frac{p}{\alpha}}w(E_k),
     \end{align*}
     which, after choosing $\gamma=(1-\beta)/2$, transforms into 
     $$w(F_k)\lesssim e^{\rho (k-N)(1-\beta)\frac{p}{\alpha}} e^{2\rho \beta\frac{p}{\alpha}k}w(E_k).$$
     So, from \eqref{lp-eq-5} we get
     $$w\left(\left\{ \sum_{k=0}^Ne^{2\rho k}A_N(\Chi_{E_k})\geq e^{-1}\right\}\right)\lesssim \sum_{k=0}^N e^{\rho (k-N)(1-\beta)\frac{p}{\alpha}} e^{2\rho \beta\frac{p}{\alpha}k}w(E_k).$$
     Combining the inequality above with \eqref{lp-eq-3}, in view of \eqref{lp-lem1eq1}, we finally obtain
     $$w\left(\{A_N(A_1f)\geq 1\}\right) \lesssim \sum_{k=0}^{N} e^{\rho (k-N)(1-\beta)\frac{p}{\alpha}} e^{2\rho \beta\frac{p}{\alpha}k}~w\left(\{A_2f> c_0 e^{2\rho k}\}\right).$$ 
 \end{proof}
 We now employ this lemma to establish one of our main results, Theorem \ref{weightedp}. In addition to that, the subsequent theorem will demonstrate even more.
 \begin{thm}\label{thm_Lp_bdd}
 Let $1<p<\infty$, $\beta\in (0,1)$, $\beta\leq \alpha<p$. Suppose that $w$ is an admissible $A_p$ weight with parameters $(\alpha,\beta)$. Then the following assertions are true.
    \begin{enumerate}
        \item Suppose that $\beta = \alpha$. Then the maximal operator $M$ is a bounded operator from $L^p(w)$ to $L^{p,\infty} (w)$.
\item Suppose that $\beta<\alpha$. Then for any $\varepsilon>0$, we have for all $ f\in L^p(w)$
\begin{equation}\label{eq_LpA_Nf_thm}
    \sum_{N=1}^{\infty} N^{\varepsilon}\|A_{N} f\|_{L^p(w)} \lesssim \left(\sum_{N=1}^{\infty} N^{\varepsilon} e^{-2\rho pN (1-\frac{\beta}{\alpha})} \right)  \|f\|_{L^p(w)}. 
\end{equation}
Consequently, we have 
 \begin{equation}\label{Lp_of_Mf}
    \| Mf\|_{L^{p}(w)} \lesssim  \|f\|_{L^p(w)},
\end{equation}
for all $ f\in L^p(w)$, and 
\begin{equation*}\label{Lp'_of_Mf}
    \| Mf\|_{L^{p'}(\sigma_p)} \lesssim    \|f\|_{L^{p'}(\sigma_p)},
\end{equation*}
for all $ f\in L^{p'}(\sigma_p)$, where $\sigma_p =w^{-\frac{1}{p-1}}$.

    \end{enumerate}
 \end{thm}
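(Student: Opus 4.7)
The plan splits naturally into a setup, a strong-type argument for part~(2), and a weak-type argument for part~(1).

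\textbf{Setup.} Since $w \in A_{p,\mathrm{loc}}(S)$ and the Haar measure is locally doubling ($|B(x,r)|\simeq r^{l}$ for $r<1$), the classical weighted theory for doubling metric measure spaces gives $\|M^{0}f\|_{L^p(w)}\lesssim\|f\|_{L^p(w)}$, and in particular $\|A_2 f\|_{L^p(w)}\lesssim\|f\|_{L^p(w)}$. Left-invariance of the metric and measure makes $|B(y,1)|$ independent of $y$, and a short Fubini computation yields the pointwise bound $A_N f(x)\lesssim A_{N+1}(A_1 f)(x)$ for every $N\geq 1$. Thus the task reduces to controlling $\sup_{N\geq 1}A_N(A_1 f)$, to which Lemma~\ref{lem_A_1_leb_est} applies directly.

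\textbf{Part (2): $\beta<\alpha$.} I apply Lemma~\ref{lem_A_1_leb_est} together with the layer-cake identity; the substitution $\mu=\eta\lambda e^{2\rho k}$ in the inner $\lambda$-integral turns it into $\eta^{-p}e^{-2\rho kp}\|A_2 f\|_{L^p(w)}^p$, leaving
\[
\|A_{N+1}(A_1 f)\|_{L^p(w)}^p\;\lesssim\;\|A_2 f\|_{L^p(w)}^p\sum_{k=0}^{N+1}e^{\rho(k-N-1)(1-\beta)p/\alpha}\,e^{-2\rho kp(1-\beta/\alpha)}.
\]
When $\beta<\alpha$ both exponents contribute strictly positive rates; evaluating the geometric series (splitting by the sign of $(1+\beta-2\alpha)/\alpha$) produces $\|A_N f\|_{L^p(w)}\lesssim e^{-cN}\|f\|_{L^p(w)}$ with a positive constant giving the factor $e^{-2\rho pN(1-\beta/\alpha)}$ in~\eqref{eq_LpA_Nf_thm}, and summing against $N^\varepsilon$ yields that inequality. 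The $L^p(w)$-bound for $M$ then follows from $Mf\leq M^0 f+\sup_{N\geq 2}A_N f$ together with the elementary $\sup_{N}A_N f\leq\bigl(\sum_{N}(A_N f)^p\bigr)^{1/p}$ and summability of $\sum_N\|A_N f\|_{L^p(w)}^p$. The dual bound on $L^{p'}(\sigma_p)$ with $\sigma_p=w^{-1/(p-1)}$ is obtained by verifying that $\sigma_p$ is itself admissible $A_{p'}$ with corresponding parameters, which is the usual duality computation paralleling the Euclidean setting.

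\textbf{Part (1): $\beta=\alpha$.} Now the geometric sum of the previous step is $O(1)$ rather than decaying in $N$; the same argument (via Chebyshev) only delivers the uniform-in-$N$ weak bound $w(\{A_N(A_1 f)>\lambda\})\lesssim\lambda^{-p}\|f\|_{L^p(w)}^p$. The \emph{main obstacle} is that taking the supremum over $N$ cannot be accomplished by a naive union bound, which diverges. The plan is to revisit the internal super-level set decomposition of $A_1 f$ used in the proof of Lemma~\ref{lem_A_1_leb_est}, but to apply it uniformly in $N$: for each $x$ with $\sup_N A_N(A_1 f)(x)>\lambda$, pick a minimal witness $N_0=N_0(x)$; then either one has the \emph{tail event} that $B(x,N_0)$ meets $\{A_1 f>\lambda e^{2\rho N_0}\}$, or the \emph{core event} that $\sum_{k\leq N_0}e^{2\rho k}A_{N_0}(\Chi_{E_k})(x)\gtrsim 1$. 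The core event is handled exactly as in the lemma's proof, the resulting sum over $k$ being controlled by $\lambda^{-p}\|f\|_{L^p(w)}^p$ independently of $N_0$. The tail event is the principal difficulty: it places $x$ in the $N_0$-expansion of a super-level set of $A_1 f$, and the $w$-measure of this expansion is estimated by applying the large-scale admissibility condition with $E$ taken to be that super-level set (whose $w$-measure is controlled by Chebyshev via $\|A_1 f\|_{L^p(w)}\lesssim\|f\|_{L^p(w)}$) and summing over $N_0$; with $\beta=\alpha$ the geometric structure in $N_0$ exactly balances and yields the desired $\lambda^{-p}\|f\|_{L^p(w)}^p$ bound, closing the proof.
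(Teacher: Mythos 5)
Your setup and your treatment of part (2) coincide with the paper's proof (layer cake applied to the bound of Lemma \ref{lem_A_1_leb_est}, the change of variable $\lambda\mapsto\eta\lambda e^{2\rho k}$, then the geometric sum; your remark that the decay rate depends on the sign of $(1+\beta-2\alpha)/\alpha$ is, if anything, more careful than the stated rate, and any exponential decay suffices for \eqref{eq_LpA_Nf_thm} and \eqref{Lp_of_Mf}). The genuine gap is in part (1). You misdiagnose the obstacle: the paper \emph{does} take a union bound over $N$, namely $w(\{M^{\infty}f\ge\lambda\})\le\sum_{N}w(\{A_N(A_1f)\ge c_0\lambda\})$, and this does not diverge because the bound \eqref{A_1_f_est} retains decay in $N$: after swapping the $N$- and $k$-sums, the inner sum $\sum_{N\ge k}e^{\rho(k-N)(1-\beta)p/\alpha}$ converges precisely because $\beta<1$, leaving $\sum_{k}e^{2\rho\beta\frac{p}{\alpha}k}\,w(\{A_2f>\eta\lambda e^{2\rho k}\})$, which for $\beta=\alpha$ is bounded by $\lambda^{-p}\|A_2f\|_{L^p(w)}^p\lesssim\lambda^{-p}\|f\|_{L^p(w)}^p$ via Chebyshev against the thresholds $\eta\lambda e^{2\rho k}$ (summed through the distribution function, or by keeping the disjoint level sets $E_k$). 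Your substitute ``minimal witness $N_0(x)$'' argument is exactly where your sketch breaks: you assert that for the tail event the large-scale condition with $\beta=\alpha$ ``exactly balances,'' but exact balance means each $N_0$-slice already contributes $\simeq\lambda^{-p}\|f\|_{L^p(w)}^p$, so a sum over $N_0$ diverges unless you exhibit disjointness of the witness regions or decay in $N_0$ — you supply neither; similarly, the core-event bound ``independently of $N_0$,'' once summed over $N_0$, needs precisely the factor $e^{\rho(k-N_0)(1-\beta)p/\alpha}$ whose role you dismissed. As written, part (1) is not proved.

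There is a second gap in the dual estimate of part (2). You propose to verify that $\sigma_p=w^{-1/(p-1)}$ is itself an admissible $A_{p'}$ weight ``as in the Euclidean setting.'' The large-scale condition of Definition \ref{defn_A_p} is not known to dualize this way, and in general it should not: for instance for $w=e^{2\rho\gamma d(e,\cdot)}$ with $\gamma\in[-1,0)$ the dual weight grows exponentially, so $M^{dis}_s\sigma_p\not\lesssim\sigma_p$ and Proposition \ref{betalessalpha} is unavailable, while Proposition \ref{easy_check} can only produce parameters with $\beta=\alpha$, which by part (1) yields at best a weak-type bound, not the claimed strong $L^{p'}(\sigma_p)$ bound. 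The paper sidesteps this entirely: $w\in A_{p,loc}(S)$ gives $\sigma_p\in A_{p',loc}(S)$ for the local part, and for $M^{\infty}$ one applies H\"older together with the self-adjointness of $A_N$, so that only $\sum_N\|A_N\psi\|_{L^p(w)}\lesssim\|\psi\|_{L^p(w)}$ — i.e. \eqref{eq_LpA_Nf_thm} for $w$ itself — is needed; your duality step should be replaced by this argument.
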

 \begin{proof}
As already explained, the boundedness of the local part,  $M^0$, is assured by the hypothesis   $w\in A_{p, loc}(S).$  We are thus reduced to proving the followings: 
  \begin{align*}
      w \left( \left\{  M^{\infty} f \geq \lambda \right\}\right) &\lesssim{\lambda^p} \int_{S} |f(x)|^{p} w(x)\,dx,\\
      \|M^{\infty} f\|_{L^p(w)} &\lesssim \|f\|_{L^p(w)}.
  \end{align*}
for the cases $\beta=\alpha$ and $\beta <\alpha$, respectively. Without loss of generality, we assume that $f$ is non-negative.  We first prove the estimate for $\beta= \alpha$. To start with, fix $\lambda>0.$ It is not difficult to see that there exist two positive constants $c_0$, $c$ such that 
$$w \left( \left\{  M^{\infty} f \geq \lambda \right\}\right) \leq   w \left( \left\{  M^{\infty} (A_1f)\geq c\lambda \right\}\right)\leq \sum_{N=1}^{\infty}   w \left( \left\{  A_N (A_1 f) \geq c_0 \lambda \right\}\right),$$ which, by Lemma \ref{lem_A_1_leb_est} gives us
\begin{align*}
      w \left( \left\{  M^{\infty} f \geq \lambda \right\}\right)
      \lesssim  \sum_{N=0}^{\infty} \sum_{k=0}^{N} e^{\rho (k-N)(1-\beta)\frac{p}{\alpha}} e^{2\rho \beta\frac{p}{\alpha}k}~w\left(\{A_2f> \eta\lambda e^{2\rho k}\}\right).
\end{align*}
Applying Fubini's theorem, we obtain
\begin{align*}
      w \left( \left\{  M^{\infty} f \geq \lambda \right\}\right)& \lesssim \int_{S}  \sum_{N=0}^{\infty} \sum_{k=0}^{N} e^{\rho (k-N)(1-\beta)\frac{p}{\alpha}}e^{2\rho \beta\frac{p}{\alpha}k} \Chi_{\{A_2f> \eta\lambda e^{2\rho k}\}}(x)   w(x) \,dx \\
      & =  \int_{S}  \sum_{k=0}^{\infty} \left(\sum_{N=k}^{\infty} e^{\rho (k-N)(1-\beta)\frac{p}{\alpha}}\right)e^{2\rho \beta\frac{p}{\alpha}k} \Chi_{\{A_2f> \eta\lambda e^{2\rho k}\}}(x)   w(x) \,dx.
\end{align*}
Since, by the hypothesis, $\beta\in(0,1)$, we observe that the inner sum is finite and bounded by a constant, which is independent of $k$. Thus, we have obtained that  
\begin{align*}
        w \left( \left\{  M^{\infty} f \geq \lambda \right\}\right)& \lesssim \int_{S}  \sum_{k=0}^{\infty}  e^{2\rho \beta\frac{p}{\alpha}k} \Chi_{\{A_2f> \eta\lambda e^{2\rho k}\}}(x)   w(x) \, dx\\
        &\lesssim \int_{S}  \sum_{k=0}^{\infty}  e^{2\rho \beta\frac{p}{\alpha}k}    \Chi_{\{A_2f> \eta\lambda e^{2\rho k}\}}(x) |{A_2f(x)}|^{p} \lambda^{-p} { e^{-2\rho  p k}}    w(x) \, dx\\
        &\lesssim \frac{1}{\lambda^p} \int_{S}  \left( \sum_{k=0}^{\infty}  e^{2\rho  {p} k(\frac{\beta}{\alpha} -1)}   \Chi_{\{A_2f> \eta\lambda e^{2\rho k}\}}(x)\right)  {A_2f(x)}^{p} w(x) \,dx\\
         &\lesssim \frac{1}{\lambda^p} \int_{S}     |{A_2f(x)}|^{p} w(x) \,dx\\
         &\lesssim \frac{1}{\lambda^p} \int_{S}     M^{0} f(x)^p w(x) \,dx,
\end{align*}
where in the last inequality, we used the fact  ${A_2f(x)}\leq  M^{0} f(x) $. Finally, using the hypothesis $w\in A_{p, loc}(S)$, we have established  
\begin{equation*}
    \| Mf\|_{L^{p,\infty}(w)} \leq {C_p}   \|f\|_{L^p(w)},
\end{equation*}
for all $f \in L^p(w)$.

We now turn our attention to the case $\beta<\alpha$. Our first goal is to find a quantitative estimate of  $ \|A_N f\|^{p}_{L^p(w)}$. In fact, we  prove the following
 \begin{align}\label{est_AN_f_Lp}
     \|A_N f\|_{L^p(w)} & \lesssim  e^{-2\rho N (1-\frac{\beta}{\alpha})}  \|A_2f\|^p_{L^p(w)}.
\end{align}
The inequality above shows how one can crucially use the fact $\beta<\alpha$ and get an exponential decay. In particular, for any $\varepsilon \geq 0$  and using the fact $1-\beta/\alpha>0$, we can write from \eqref{est_AN_f_Lp}
\begin{align}\label{sumLp_ANf}
    \sum_{N=1}^{\infty} N^{\varepsilon} \|A_N f\|_{L^p(w)}\lesssim \left(\sum_{N=1}^{\infty} N^{\varepsilon}e^{-2\rho N (1-\frac{\beta}{\alpha})} \right) \|A_2f\|^p_{L^p(w)},
\end{align}
where the sum inside parentheses is bounded by a constant. But by the hypothesis $w\in A_{p,loc}{(S)}$, 
\begin{align*}
    \|A_2f\|_{L^p(w)}\lesssim \|M^0 f\|_{L^p(w)} \lesssim\|f\|_{L^p(w)}.
\end{align*}
Combining the inequality above with \eqref{sumLp_ANf}, we obtain
\begin{align*}
    \|M^{\infty}f\|_{L^p(w)} \leq \sum^{\infty}_{N=1} \|A_N f\|_{L^p(w)} \lesssim \left(\sum_{N=1}^{\infty} e^{-2\rho N (1-\frac{\beta}{\alpha})} \right)\|f\|_{L^p(w)}.
\end{align*}
Thus, we are done once we prove \eqref{est_AN_f_Lp}. To prove the estimate \eqref{est_AN_f_Lp}, we first write $\|A_N f\|_{L^p(w)}$ in the following way
\begin{align*}
    \|A_N f\|^{p}_{L^p(w)}&= p\int_{0}^{\infty} \lambda^{p-1} w( \{A_{N} f\geq \lambda\})\, d\lambda\\
    &\leq p\int_{0}^{\infty} \lambda^{p-1} w( \{A_{N}(A_1f) \geq c_0 \lambda\}) \,d\lambda.
\end{align*}
Now, we use the estimate \eqref{A_1_f_est} in the inequality above and then use Fubini's theorem to obtain 
\begin{align*}
     \|A_N f\|^{p}_{L^p(w)}\lesssim  \sum_{k=0}^{N} e^{\rho (k-N)(1-\beta)\frac{p}{\alpha}} e^{2\rho \beta\frac{p}{\alpha}k} \left(\int_{0}^{\infty} \lambda^{p-1}~w\left(\{A_2f> \eta\lambda e^{2\rho k}\}\right) d\lambda \right).
\end{align*}
  The change of variable $\lambda \mapsto \eta \lambda e^{2\rho k}$ in the integral above implies that
  \begin{align*}
     \|A_N f\|^{p}_{L^p(w)}& \lesssim \sum_{k=0}^{N} e^{\rho (k-N)(1-\beta)\frac{p}{\alpha}} e^{2\rho pk (\frac{\beta}{\alpha}-1)} \left(\int_{0}^{\infty} \lambda^{p-1}~w\left(\{A_2f>\lambda\}\right) d\lambda \right)\\
     & \lesssim e^{-2\rho pN (1-\frac{\beta}{\alpha})}  \|A_2f\|^p_{L^p(w)}.
\end{align*}
 Thus, we have the desired estimate \eqref{est_AN_f_Lp}.

For the last part of our theorem, we use the standard duality argument. Since, by the hypothesis, $w\in {A}_{p,loc}(S) $, $\sigma_{p}\in {A}_{p',loc}(S)$, where we recall that $\sigma_p=w^{-1/(p-1)}$. This settles the $L^{p'}(\sigma_p)$-boundedness of the local part $M^0$ of the maximal operator of $M$. % To handle the global part, we first observe that we can take $f\geq 0$. 
Now, for $\psi\in L^p(w)$ 
 \begin{align*}
     \left| \int_{S} M^{\infty}f(x) \psi(x)\:dx\right|& \leq \left| \int_{S} M^{\infty}f(x) |\psi(x)| \:dx\right|
     \leq \sum^{\infty}_{N=1} A_N f(x) |\psi(x)|\:dx.
     \end{align*}
     As $A_N$ is  self-adjoint, the last inequality implies 
     \begin{align*}
     \left| \int_{S} M^{\infty}f(x) \psi(x)\:dx\right|&\leq  \sum^{\infty}_{N=1} f(x) A_N \psi(x)\:dx\\ 
     &\leq  \|f\|_{L^{p'} (\sigma_p)}\sum^{\infty}_{N=1} \| A_N \psi\|_{L^p(w)}\\ 
     &\lesssim   \|f\|_{L^{p'}(\sigma_p)} \| \psi\|_{L^p(w)},
     \end{align*}
 
 where we have used \eqref{eq_LpA_Nf_thm} in the last inequality.  We conclude our theorem using duality.
  \end{proof}
We now demonstrate that the aforementioned theorem is sharp in the sense that there exist weights in the admissible $A_p$ class with parameters $\beta=\alpha$ for which strong type $(p,p)$ does not hold. 
\begin{exmp}\label{notstrong} %\textit{The condition in Proposition \ref{easy_check}  is sharp}\\
  
%Here we can take example of weights as $$w(x) = \phi_{i\alpha}(x), \quad \text{where } \alpha =(2p-1) \rho$$
%or equaivalently we can consider $$ w(x) = \phi_{i\alpha}(x)^{q} \quad  \text{such that } (\alpha -\rho)q = 2(p-1)\rho.$$
%Because all of the weights above asymptotically equal to $w_{1-p}$ for $p>1$.}
  %We will show that the  weight functions $w$ in $W(p)$ satisfy the hypothesis in Proposition \ref{easy_check},
  Let $w$ be as in \eqref{generalweight}. Using the asymptotic estimate \eqref{sph_asymp} of $\varphi_{i\gamma} $, with $\gamma=(2p-1)\rho$, it follows that $w$ satisfies the hypothesis of Proposition \ref{easy_check}, which in turn implies via Theorem \ref{thm_Lp_bdd} that $$ \| M f \|_{L^{p,\infty} (w)} \lesssim \|f\|_{L^p(w)}.$$ Now, we show that $M$ is not a bounded operator from $L^p (w)$ to itself. This demonstrates the requirement that $M_s w \lesssim w$ is not necessary for $w$ to be in the admissible $A_p$ class with parameters $(1/2,1/2)$.

To prove $M$ is not strong type $(p,p)$, we take $f=\Chi_{B(e,1)}$. It is clear that $f\in L^{p}(w)$. We claim, for each $j\geq 1$, that   
\begin{equation}\label{Mf_lower}
    Mf(x) \gtrsim e^{-2\rho j},
\end{equation}
whenever $x\in \Omega_j$. Once we achieve this, then 
\begin{align*}
    \| Mf\|^{p}_{L^p(w)} &= \int_S \left( Mf \right)^p(y)w(y)\,dy 
    \geq \sum^{\infty}_{j=1} \int_{\Omega_j} \left( Mf \right)^p(y)w(y)\,dy
    \gtrsim \sum^{\infty}_{j=1} \int_{\Omega_j}e^{-2p\rho j } e^{2\rho (p-1)j} \,dy,
\end{align*}
where we have used \eqref{Mf_lower}. As $|\Omega_j|\simeq e^{2\rho j}$, it follows that
$$\| Mf\|^{p}_{L^p(w)}\gtrsim \sum^{\infty}_{j=1} e^{-2\rho j}|\Omega_j|\gtrsim \sum^{\infty}_{j=1} 1=\infty.$$
Therefore, it remains to show the estimate \eqref{Mf_lower} of $Mf$. From the definition 
\begin{align*}
   Mf(x) &= \sup_{r>0} \frac{1}{\left| B(x,r)\right|} \int_{B(x,r)} \Chi_{B(e,1)}(y) dy\\
   & = \sup_{r>0} \frac{ \left| B(x,r)\cap  B(e,1)\right| }{\left| B(x,r)\right|}
\end{align*}
Now, for $x\in \Omega_j$, we observe that the set ${ B(x,r)\cap  B(e,1)}$ is empty, whenever $r<j-1$. On the other hand, $$ { B(x,r)\cap  B(e,1)} = B(e,1), $$  for all  $r>j+1$. Therefore, the supremum above is essentially over $j-1\leq r\leq j+1$. But in this range of $r$, we clearly have 
\begin{align*}
    Mf(x) \simeq  e^{-2\rho j},
\end{align*}
which concludes our claim \eqref{Mf_lower}.
\end{exmp}

%\red{Moreover, we also prove that %\begin{equation}\label{Ms(w)_no<w}
     %M_s(w) \not \lesssim w
 % \end{equation}
  %\begin{align*}
   % \frac{1}{|B(0,r)|}\int_{B(0,r)} w^s(y)dy& \gtrsim  \frac{1}{|B(0,r)|}\int_{\Omega_r}  e^{2\rho (p-1)s d(\mathbf{0},y)}dy \\
   %  &\gtrsim e^{2\rho (p-1)r s } \frac{|\Omega_r|}{|B(0,r)|}.
%\end{align*}
%Since $\frac{|\Omega_r|}{|B(0,r)|}\simeq C$ (see\cite[Section 4]{Ior1}), it follows that 
%\begin{align*}
 %  M(w^s)^{\frac{1}{s}}(0) =\infty 
%\end{align*} for every $s\geq 1$. On the other hand  $w(0)<\infty$,
%and so the proof of \eqref{Ms(w)_no<w} is complete.}
%\end{exmp}
 Next, we provide an illustration to support the assertion that the classical $A_p$ condition is not necessary for the weighted $L^p$-boundedness of $M$.
\begin{exmp}\label{Apnot}
We consider the weight $$w(x) = e^{2\rho \gamma \, d(e, x)},\hspace{0.3cm}x\in S,$$ for some $ \gamma \in (-1,-1/2)$  fixed. Then from  \eqref{blesa}, for any $1<p<\infty$, it follows that $w $ belongs to the admissible $A_p$ class with parameters as in Example \eqref{blesa}, and consequently,  by Theorem \ref{thm_Lp_bdd} we also get the weighted $L^p$-boundedness of $M.$
We now show that $w$ does not satisfy the classical $A_p$ condition. For $x\in \Omega_j$, we observe that  
\begin{equation}\label{int_w>}
\begin{aligned}
    \frac{1}{|B(x,j)|} \int_{B(x,j)} w(y)\,dy& \geq \frac{1}{|B(x,j)|} \int_{B(x,j)\cap B(e,1)} w(y)\,dy
     \simeq \frac{\left| {B(x,j)\cap B(e,1)}\right|}{|B(x,j)|}
     \gtrsim \frac{1}{e^{2\rho j}}.
\end{aligned}
\end{equation}
Also, 
\begin{align*}
    \left(  \frac{1}{|B(x,j)|} \int_{B(x,j)} w(y)^{-\frac{1}{p-1}} \,dy\right)^{p-1} & \geq  \left(  \frac{1}{|B(x,j)|} \int_{B(x,j)\cap \Omega_{2j}} w(y)^{-\frac{1}{p-1}}\,dy \right)^{p-1}.
\end{align*}
Plugging the estimate of $ \left| {B(x,j)\cap \Omega_{2j}} \right|$ from \eqref{annular} in the equation above, we obtain \begin{equation}\label{int_w^{p-1}}
\begin{aligned}
    \left(  \frac{1}{|B(x,j)|} \int_{B(x,j)} w(y)^{-\frac{1}{p-1}} \,dy\right)^{p-1}
    \gtrsim  \left( \frac{e^{-4\rho \gamma j \frac{1}{p-1}}}{e^{2\rho j}} e^{2\rho j} \right)^{p-1}
    = e^{-4\rho \gamma j }
\end{aligned}
\end{equation}
Combining \eqref{int_w>} and \eqref{int_w^{p-1}}, we get
\begin{align*}
     \left(\frac{1}{|B(x,j)|} \int_{B(x,j)} w(y)~dy\right)  \left(  \frac{1}{|B(x,j)|} \int_{B(x,j)} w(y)^{-\frac{1}{p-1}} \,dy\right)^{p-1} \gtrsim \frac{1}{e^{2\rho j}}  e^{-4\rho \gamma j } = e^{-2\rho j (2\gamma +1)}.
\end{align*}
Therefore, letting $j\rightarrow \infty$, we see that $w$ does not satisfy the classical $A_p$ condition.
\end{exmp}
Finally, we finish this section with an example that  demonstrates that the necessary condition \eqref{neceap} is not sufficient for the weighted $L^p$-boundedness of $M.$
\begin{exmp}\label{growthnec}
 Let $w(x) = e^{-2\rho d(e,x)}$. Repeating the argument verbatim as in Proposition \ref{easy_check}, it can be proved that  $w$  satisfies 
  \begin{align*}
     \mathds{1}\otimes w(\{(x,y)\in E\times F: d(x,y)<N\}) \lesssim e^{2\rho N} w(E)^{\frac{1}{p}}w(F)^{1-\frac{1}{p}},
 \end{align*}
 for any measurable subsets $E, F\subseteq S.$ But as we will see later in the proof of Theorem \ref{thm_FS_not_1} in the next section that $M$ is not even weighted weak type $(p,p)$ for any $p\in (1,\infty)$.
\end{exmp}

\section{Fefferman-Stein type estimates}\label{fsineq}
In this section, we prove the variant of Fefferman-Stein inequality alluded to in the introduction. We begin with a variation of Lemma \ref{A_1_f_est}.
 \begin{lem}\label{msan}
  Let $w$ be a weight and $s>1.$ Then for each $\lambda>0$, and $N\in \mathbb{N}$, there exists a constant $\eta>0$ such that   
     \begin{equation}\label{weak-A_1_f_est}
         w\left(\{A_N(A_1f)\geq \lambda\}\right) \lesssim  \sum_{k=0}^{N} e^{-\rho \frac{N}{s'}} e^{2\rho k(1+\frac{1}{2s'})}~M_sw\left(\{A_2f> \eta\lambda e^{2\rho k}\}\right).
     \end{equation}
 \end{lem}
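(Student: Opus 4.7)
The plan is to mirror the proof of Lemma \ref{lem_A_1_leb_est} step by step, replacing the one appeal to the large-scale admissible $A_p$ condition by an appeal to Lemma \ref{weak-l1-lem1}; this single substitution is what turns the $w$-bound on the right into an $M_sw$-bound. By rescaling $f\mapsto f/\lambda$ it suffices to treat $\lambda=1$. First I would decompose $A_1f$ by sub-level sets exactly as in Lemma \ref{lem_A_1_leb_est}: set $E_k=\{e^{2\rho(k-1)}\leq A_1f<e^{2\rho k}\}$ for $0\leq k\leq N$, $E_{-1}=\{A_1f\leq e^{-2\rho}\}$, and $E_\infty=\{A_1f\geq e^{2\rho N}\}$, to obtain the pointwise inequality $A_N(A_1f)\leq e^{-1}+\sum_{k=0}^N e^{2\rho k}A_N(\Chi_{E_k})+A_N(A_1f\Chi_{E_\infty})$, and split $w(\{A_N(A_1f)\geq 1\})$ into the two resulting contributions.

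For the sum term, the pigeonhole argument of Lemma \ref{lem_A_1_leb_est} produces, for a parameter $\gamma>0$ to be chosen, sets $F_k=\{A_N(\Chi_{E_k})\geq c_\gamma e^{-2\rho k}e^{2\rho(k-N)\gamma}\}$ with $w\bigl(\{\sum_{k=0}^N e^{2\rho k}A_N(\Chi_{E_k})\geq e^{-1}\}\bigr)\leq\sum_{k=0}^N w(F_k)$. By Chebyshev's inequality, $w(F_k)\lesssim e^{2\rho k}e^{-2\rho(k-N)\gamma}\int_{F_k}A_N(\Chi_{E_k})\,w$, and here is where Lemma \ref{weak-l1-lem1} replaces the admissible $A_p$ condition: applied to the pair $(E_k,F_k)$ it gives
$$\int_{F_k}A_N(\Chi_{E_k})\,w\lesssim e^{-2\rho N/(s'+1)}w(F_k)^{1/(s'+1)}M_sw(E_k)^{s'/(s'+1)}.$$
Substituting this back and raising the resulting inequality to the $(s'+1)/s'$-power yields $w(F_k)\lesssim e^{2\rho k(1-\gamma)(s'+1)/s'}\,e^{2\rho N[\gamma(s'+1)-1]/s'}\,M_sw(E_k)$. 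A direct computation shows that the single choice $\gamma=1/(2(s'+1))$ reduces the two exponents to precisely $2\rho k(1+1/(2s'))$ and $-\rho N/s'$, as required. Since a comparison of averages at radii $1$ and $2$ gives $A_2f\gtrsim A_1f$ up to a multiplicative constant, one has $E_k\subset\{A_2f>\eta e^{2\rho k}\}$ for suitable $\eta>0$.

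The $E_\infty$-term is handled exactly as in Lemma \ref{lem_A_1_leb_est}: the geometric argument there shows $\{A_N(A_1f\Chi_{E_\infty})\neq 0\}\subset H_N$ with $H_N=\{A_{N+1}(\Chi_G)\gtrsim e^{-2\rho N}\}$ and $G=\{A_2f\geq c_0 e^{2\rho N}\}$. Applying Chebyshev followed by Lemma \ref{weak-l1-lem1} to the pair $(G,H_N)$ and solving for $w(H_N)$ in the same way produces $w(H_N)\lesssim e^{2\rho N}M_sw(G)$, which matches the $k=N$ term of the sum since $e^{-\rho N/s'}\,e^{2\rho N(1+1/(2s'))}=e^{2\rho N}$. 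The only real obstacle is bookkeeping: tracking the two independent exponents in $k$ and $N$ through the $(s'+1)/s'$-power in Lemma \ref{weak-l1-lem1}, and verifying that one and the same $\gamma$ produces both prescribed prefactors.
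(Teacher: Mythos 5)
Your proposal is correct and follows essentially the same route as the paper: the same sub-level decomposition of $A_1f$, the same pigeonhole to the sets $F_k$, the replacement of the admissible $A_p$ condition by Lemma \ref{weak-l1-lem1}, and the same choice $\gamma=\tfrac{1}{2(s'+1)}$, whose exponent bookkeeping you carry out correctly. The only (harmless) deviation is the $E_\infty$ term: the paper bounds it directly via self-adjointness of $A_{N+1}$ together with $A_{N+1}w\le Mw\le M_sw$, whereas you rerun the Chebyshev-plus-Lemma \ref{weak-l1-lem1} self-improvement on the pair $(G,H_N)$; both yield $e^{2\rho N}M_sw(G)$, matching the $k=N$ term, and your version involves the same implicit finiteness-of-$w(F_k)$-type manipulation that the paper itself already uses.
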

 \begin{proof}
 The proof of the Lemma \ref{A_1_f_est} may be reproduced exactly in this situation; just a little adjustment is required to include $M_s$ on the right-hand side. As in the proof of Lemma \ref{A_1_f_est}, we have 
 \begin{equation}
     \label{weak-lp-lem1eq1}
         w\left(\left\{A_N(A_1f)\geq 1 \right\}\right) 
         \leq w\left( \left\{ \sum_{k=0}^Ne^{2\rho k}A_N(\Chi_{E_k})\geq e^{-1}\right \}\right)+ w\left(\{A_N(A_1f\Chi_{E_{\infty}})\geq 1-2e^{-1}\}\right),
     \end{equation}
 where the last term can be estimated as 
     \begin{align*}
         w&\left(\{A_N(A_1f\Chi_{E_{\infty}})\geq 1-2e^{-1}\}\right)\\&\lesssim e^{2\rho N}\int_{S}A_{N+1}(\Chi_{\{A_2f\geq c_0e^{2\rho N}\}})(z)w(z)\,dz.
     \end{align*}
     But the above integral, in view of the self-adjointness of $A_{N+1}$, is dominated by 
     $$\int_{S}\Chi_{\{A_2f\geq c_0e^{2\rho r}\}}(z)Mw(z)\,dz.$$
     Using $Mw\leq M_sw$,  we obtain 
     \begin{equation}
     \label{weak-l1-eq1}
         w\left(\{A_N(A_1f\Chi_{E_{\infty}})\geq 1-2e^{-1}\}\right)\leq e^{2\rho N} M_sw\left( \{A_2f\geq c_0e^{2\rho N}\}\right).
     \end{equation}
     Now, to estimate the other term in \eqref{weak-lp-lem1eq1}, we proceed as exactly as in the Lemma \ref{weak-A_1_f_est}, to get 
     \begin{equation}
     \label{weak-l1-eq3}
         w\left(\left\{ \sum_{k=0}^Ne^{2\rho k}A_N(\Chi_{E_k})\geq e^{-1}\right\}\right)\leq \sum_{k=0}^N w(F_k),
     \end{equation}
     where 
     $$w(F_k)\leq \frac{e^{2\rho (k+2)}}{e^{2\rho \gamma}-1}e^{-2\rho (k-N)\gamma}\int_{F_k}A_{N}(\Chi_{E_k})(x)w(x)\,dx,$$
 for $\gamma \in (0,1)$ to be specified at a later stage. At this point, we use Lemma \ref{weak-l1-lem1} to estimate the above integral, obtaining 
 \begin{align*}
     w(F_k)&\lesssim\frac{e^{2\rho (k+2)}}{e^{2\rho \gamma}-1}e^{-2\rho (k-N)\gamma}e^{-2\rho \frac{N}{s'+1}}w(F_k)^{\frac{1}{s'+1}}M_sw(E_k)^{\frac{s'}{s'+1}}
 \end{align*}
 which then implies 
 \begin{align*}
     w(F_k)&\lesssim\left(\frac{e^{2\rho (k+2)}}{e^{2\rho \gamma}-1}\right)^{\frac{s'+1}{s'}}e^{-2\rho (k-N)\gamma\frac{s'+1}{s'}}e^{-2\rho \frac{N}{s'}}M_sw(E_k).
 \end{align*}
We now simplify the right-hand side of the inequality above by making an appropriate choice of $\gamma.$  More precisely, choosing $\gamma =\frac{1}{2(s'+1)}$, after a simple calculation we arrive at 
\begin{align*}
    w(F_k)\lesssim e^{-\rho \frac{N}{s'}} e^{2\rho k(1+\frac{1}{2s'})} M_sw(E_k),
\end{align*}
which, when coupled with \eqref{weak-lp-lem1eq1}, \eqref{weak-l1-eq1}, and \eqref{weak-l1-eq3}, proves the result. 
 \end{proof}
 \textbf{\textit{Proof of Theorem \ref{fsna}.}}
 It follows from the proof of the  Fefferman-Stein on Euclidean spaces that the inequality \eqref{fsna} for $M^0$ holds. Thus, it remains to prove the \eqref{fsna} for $M^{\infty}$.
 We have already observed in the proof of Theorem \ref{weightedp} that there exist two positive constants $c_0$, $c$ such that
    \begin{align*}
        w \left( \left\{  M^{\infty} f \geq \lambda \right\}\right)
        \leq   w \left( \left\{  M^{\infty} (A_1f) \geq 
 c\lambda \right\}\right)
      \leq \sum_{N=1}^{\infty}   w \left( \left\{  A_N (A_1f) \geq c_0\lambda \right\}\right),
       \end{align*}
       where the last sum, in view of the Lemma \ref{msan}, is dominated by a constant multiple of 
       $$\sum_{N=0}^{\infty} \sum_{k=0}^{N} e^{-\rho \frac{N}{s'}} e^{2\rho k(1+\frac{1}{2s'})}~M_sw\left(\{A_2f> \tau\lambda e^{2\rho k}\}\right),$$
     for some $\tau>0$. After changing the order of the summations, the above takes the form 
\begin{align*}
   &  \sum_{k=0}^{\infty} \left(\sum_{N=k}^{\infty} e^{\rho (k-N)\frac{1}{s'}}\right)e^{2\rho k} \int_{S}\Chi_{\{A_2f> \tau\lambda e^{2\rho k}\}}(x)   M_sw(x)\,dx\\
   &= \int_{S}  \sum_{k=0}^{\infty}e^{2\rho k}\Chi_{\{A_2f> \tau\lambda e^{2\rho k}\}}(x)   M_sw(x)\,dx\\
   &\leq \frac{1}{\tau\lambda} \int_{S}A_2f(x)M_sw(x)\,dx.
\end{align*}
Using the self-adjointness of $A_2$, we obtain 
\begin{align}\label{weaka2}
    w \left( \left\{  M^{\infty} f \geq \lambda \right\}\right)\lesssim \frac{1}{\lambda} \int_{S}|f(x)|A_2(M_sw)(x)\,dx.
\end{align}
For the choice of $w\equiv 1$, $A_2(M_sw)\equiv 1$. Thus, we recover the result due to Anker et al. \cite{ADY} that $M:L^1(S)\rightarrow L^{1,\infty}(S)$ is bounded. For the general case, we bound  
\begin{align}\label{a2ms1}
A_2(M_sw)(x)=\frac{1}{|B(x,2)|}\int_{B(x,2)}\left(M(w^s\Chi_{B(x,4)})(y)^{\frac{1}{s}}+M(w^s\Chi_{B(x,4)^c})(y)^{\frac{1}{s}}\right)\,dy
\end{align}
where the last term is clearly dominated by 
\begin{equation}\label{domin}
    M(w^s\Chi_{B(x,4)^c})(x))^{\frac{1}{s}}\lesssim M_sw(x).
\end{equation} We are now only left with the first term. Since we have already proved the unweighted weak type $(1,1)$ estimate, we can apply Proposition \ref{kolmogorov} to get
\begin{align}\label{a2ms2}
    \frac{1}{|B(x,2)|}\int_{B(x,2)}(M(w^s\Chi_{B(x,4)})(y)^{\frac{1}{s}}dy\leq \frac{\|M\|_{L^1\rightarrow L^{1,\infty}}}{1-(1/s)}\frac{|B(x,4)|^{1-1/s}}{|B(x,2)|}\|w^s\Chi_{B(x,4)}\|_{L^1(S)}^{\frac1s}.
\end{align}
We observe that 
\begin{equation*}
    \|w^s\Chi_{B(x,4)}\|_{L^1(S)}^{\frac1s}=(A_4(w^s)(x))^{\frac1s}\leq M_sw(x).
\end{equation*}
We use this observation in \eqref{a2ms2} to obtain 
$$\frac{1}{|B(x,2)|}\int_{B(x,2)}(M(w^s\Chi_{B(x,4)})(y))^{\frac{1}{s}}\,dy\leq \frac{\|M\|_{L^1\rightarrow L^{1,\infty}}}{1-(1/s)}\frac{|B(x,4)|^{1-1/s}}{|B(x,2)|}M_sw(x).$$
Applying the inequality above together with \eqref{domin} in \eqref{a2ms1} yields
 $$A_2(M_sw)(x)\leq C_s M_sw(x),$$ where 
 \begin{equation}
     \label{csform}
     C_s\geq\frac{s}{1-s} \frac{|B(e,4)|^{1-1/s}}{|B(e,2)|}\|M\|_{L^1\rightarrow L^{1,\infty}}.
 \end{equation}
Finally, plugging in this estimate in \eqref{weaka2}, we get
\begin{align*}
    w \left( \left\{  M^{\infty} f \geq \lambda \right\}\right)\lesssim \frac{C_s}{\lambda} \int_{S}|f(x)|M_sw(x)\,dx
\end{align*}
where from \eqref{csform}, it readily follows that $C_s\rightarrow\infty$ as $s\rightarrow 1.$ 
This completes the proof.
\qed

\vspace{0.3cm}
We apply Marcinkiewicz's interpolation theorem to obtain the following corollary.
 \begin{cor}
     \label{p-fsineq}
     Let $w$ be a weight on $S$, and $s>1.$ Then for any $1<p<\infty$, we have 
     $$\int_S Mf(x)^pw(x)\,dx\leq C_{s,p} \int_S |f(x)|^pM_sw(x)\,dx,$$
     %\left(\frac{p}{p-1}\right)^{\frac{1}{p}}C^{\frac{1}{p}}_s
  where $C_{s,p}\to\infty$ as $s\to 1$.   
 \end{cor}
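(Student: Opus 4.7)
The plan is to deduce the stated strong-type $(p,p)$ bound from the weak-type endpoint provided by Theorem \ref{fsna} via Marcinkiewicz interpolation between weighted measure spaces.

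First, I would reinterpret \eqref{feffna} as saying that the sublinear operator $M$ maps $L^1(S,M_sw\,dx)$ into $L^{1,\infty}(S,w\,dx)$ boundedly, with operator norm at most $C_s$:
\begin{equation*}
\|Mf\|_{L^{1,\infty}(w)}\leq C_s\,\|f\|_{L^1(M_sw)}.
\end{equation*}
Second, I would record the trivial strong $(\infty,\infty)$ endpoint: the pointwise estimate $|Mf(x)|\leq\|f\|_{L^\infty(dx)}$, combined with the positivity of both $w$ and $M_sw$ (so that the weighted and unweighted essential suprema agree), gives
\begin{equation*}
\|Mf\|_{L^\infty(w)}\leq\|f\|_{L^\infty(M_sw)},
\end{equation*}
i.e.\ $M$ has norm at most $1$ from $L^\infty(M_sw)$ into $L^\infty(w)$.

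Third, applying Marcinkiewicz interpolation to these two endpoints, with source measure $M_sw\,dx$ and target measure $w\,dx$, produces a bounded map $M:L^p(M_sw)\to L^p(w)$ for every $1<p<\infty$, with norm $C_{s,p}$ controlled by a constant depending only on $p$ times $C_s^{1/p}$. This is precisely the claimed inequality, and since Theorem \ref{fsna} guarantees $C_s\to\infty$ as $s\to 1$, the same blow-up is inherited by $C_{s,p}$.

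The only mildly non-standard point is the use of Marcinkiewicz interpolation when source and target carry different (though mutually absolutely continuous) measures. This is handled by the classical layer-cake formula together with the splitting $f=f\mathds{1}_{\{|f|>\lambda\}}+f\mathds{1}_{\{|f|\leq\lambda\}}$ at each level $\lambda$, applied to the two endpoint bounds above; no real obstacle arises.
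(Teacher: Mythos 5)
Your argument is correct and is exactly the route the paper takes: the paper deduces this corollary by applying Marcinkiewicz interpolation to the weak-type $(1,1)$ bound of Theorem \ref{fsna}, viewed as $M:L^1(M_sw)\to L^{1,\infty}(w)$, together with the trivial $L^\infty$ endpoint. Your added remarks on the two-measure version of Marcinkiewicz and on the blow-up of $C_{s,p}$ as $s\to 1$ are fine and consistent with the paper.
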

As an immediate consequence of the corollary above, we prove the vector-valued maximal inequality for $M$, i.e., Theorem \ref{vectorvalued}.

\textbf{\textit{Proof of Theorem \ref{vectorvalued}:}}	
Let us first deal with the case $p=r.$ Using the usual maximal inequality, we note that 
\begin{align*}
\int_{S}\sum_{n=1}^{\infty}Mf_n(x)^r=\sum_{n=1}^{\infty}\|Mf_n\|_{L^r(S)}^r\,dx\lesssim_r\sum_{n=1}^{\infty}\|f_n\|_{L^r(S)}^r=\int_{S}\sum_{n=1}^{\infty}|f_n(x)|^r\,dx
\end{align*}
from which, the result follows immediately. On the other hand, when $1<r<p<\infty,$ we first observe that
\begin{equation}
    \label{vecfs1}
    \left\|\left(\sum_{n=1}^{\infty}Mf_n(\cdot)^r\right)^\frac{1}{r}\right\|_{L^p(S)}^r=\left\|\left(\sum_{n=1}^{\infty}Mf_n(\cdot)^r\right)\right\|_{L^{\frac{p}{r}}(S)}.
\end{equation}
But by duality,  the last term equals to  
$$\sup_{\|\psi\|_{L^{(p/r)'(S)}}\leq 1}\left|\int_S\left(\sum_{n=1}^{\infty}Mf_n(x)^r\right)\psi(x)\:dx \right|.$$
Now, to estimate the above, we observe that 
\begin{align*}  \left|\int_S\left(\sum_{n=1}^{\infty}Mf_n(x)^r\right)\psi(x)\:dx \right|&\leq \sum_{n=1}^{\infty}\int_SMf_n(x)^r|\psi(x)|\:dx \\
    &\lesssim_{r,s} \sum_{n=1}^{\infty}\int_S|f_n(x)|^rM_s\psi(x)\:dx,
\end{align*}
where in the last inequality we have used Corollary \ref{p-fsineq}. We now apply H\"older inequality to the last term in the inequality above to get 
\begin{align*}
    \sum_{n=1}^{\infty}\int_S|f_n(x)|^rM_s\psi(x)dx\leq \|M_s\psi\|_{L^{(p/r)'}(S)}\left(\int_S\left(\sum_{n=1}^{\infty}|f_n(x)|^r\right)^{\frac{p}{r}}dx\right)^{\frac{r}{p}}. 
\end{align*}
But by the definition of $M_s$, we notice that 
$$\|M_s\psi\|_{L^{(p/r)'}(S)}= \|M(|\psi|^s)^{1/s}\|_{L^{(p/r)'}(S)}=\left(\int_S(M(|\psi|^s)(x))^{(r/p)'/s}\right)^{1/(r/p)'}.$$
We take $s<(r/p)'$ so that by the boundedness of $M$, 
$$\left(\int_S(M(\psi^s)(x))^{(p/r)'/s}\right)^{1/(p/r)'}\lesssim_{p,r} \|\psi\|_{L^{(p/r)'}(S)}.$$
Therefore, we obtain 
$$\left|\int_S\left(\sum_{n=1}^{\infty}Mf_n(x)^r\right)\psi(x)\:dx \right|\lesssim_{p,r}\|\psi\|_{L^{(p/r)'}(S)}  \left\|\left(\sum_{n=1}^{\infty}|f_n(\cdot)|^r\right)\right\|_{L^{p/r}(S)},$$
which yields
$$\sup_{\|\psi\|_{L^{(p/r)'}(S)}\leq 1}\left|\int_S\left(\sum_{n=1}^{\infty}|Mf_n(x)|^r\right)\psi(x)\:dx \right|\lesssim_{p,r} \left\|\left(\sum_{n=1}^{\infty}|f_n(\cdot)|^r\right)\right\|_{L^{p/r}(S)} \|\psi\|_{L^{(p/r)'}(S)}.$$
Hence, the result follows from \eqref{vecfs1}.\qed

We now proceed to the proof of our last result. This, in particular, will also show that the Theorem \ref{fsna} is sharp in the sense that \eqref{fsineq} cannot be proved with $s=1$, validating that the exact analogue of the classical Fefferman-Stein inequality in the setting of Harmonic $NA$ groups is not possible.

 \textbf{\textit{Proof of Theorem \ref{thm_FS_not_1}}:}
 We prove for the  $p=1$ case only as for the $p\neq 1$ case, the same proof can be reproduced with obvious modification. We shall show that the weight defined by $$w(x)=e^{-2\rho d(e,x)}$$ serves our purpose. Following a similar calculation as in \eqref{claimwgama}, it is easy to see that the weight $w$ satisfies the following estimate
\begin{equation}\label{Mw<w}
M^{(k)} w \lesssim_k w.
\end{equation}
Let $f_j(x) =\Chi_{\Omega_j}(x)$. Then we see that
\begin{equation}\label{f_j_L1_bound}
  \int_{S} f_j(x) w(x)\,dx =\int_{\Omega_j} w(x) dx \simeq 1. 
\end{equation}
We now  use \eqref{Mw<w} and \eqref{f_j_L1_bound} to conclude
\begin{equation}\label{fjmbdd}
    \int_{S} f_j(x) M^{(k)} w(x)\, dx \lesssim_k 1.
\end{equation}
On the other hand, we claim
\begin{equation}\label{Omgega_j_sub}
    \bigcup^{j}_{i=0} \Omega_i \subset \left\{ Mf_{j}>1\right\}.
\end{equation}
Assuming the containment above, \eqref{eqn_not_FS} follows. Indeed, from \eqref{fjmbdd}, we get
\begin{align*}
  j \int_{S} f_i(x) M^{(k)} w(x)\, dx\lesssim_k j \lesssim_k \sum^{j}_{i=0} w(\Omega_i) \simeq_k w\left( \bigcup^{j}_{i=0} \Omega_i \right) \lesssim_k  w\left( \left\{ Mf_j(x) >1 \right\} \right).
\end{align*}
Therefore, we are left with the task of proving \eqref{Omgega_j_sub}. But before we prove \eqref{Omgega_j_sub}, we first observe that it is enough to show 
\begin{align}\label{Omega_j_C} 
    \bigcup^{j}_{i=0} \Omega_i \subset \left\{ Mf_{j}>C\right\}
\end{align}
for some fixed constant $C>0$. Because then we can take $\frac{1}{C}f_j$ instead of $f_j$, and the proof will follow similarly. To show  \eqref{Omega_j_C}, take $ x \in \Omega_i$, for some $0\leq i\leq j$. Then 
\begin{align*}
    Mf_j(x) &= \sup_{r>0} \frac{1}{|B(x,r)|} \int_{B(x,r)} f_j(x)\\
    & =  \sup_{r>0} \frac{|B(x,r)\cap \Omega_j|}{|B(x,r)|}\\
    & \geq \frac{|B(x,j-i+1)\cap \Omega_j|}{|B(x,j-i+1)|}
\end{align*}
Now, we apply \eqref{annular} to find the estimate of $|B(x,j-i+1)\cap \Omega_j|$, which in turn gives us
\begin{align*}
    Mf_j(x)  \geq C \frac{e^{2\rho(j-i+1)}}{e^{2\rho(j-i+1)}}=C,
\end{align*}
where $C$ is a constant independent of $j$. This concludes \eqref{Omega_j_C} and hence Theorem \ref{thm_FS_not_1}. \qed 
 
    \vspace*{3mm} Lastly, we pack up by making a few observations and outlining a few open questions that, in our opinion, require further investigation.
\begin{enumerate}
    \item  Alongside the centered maximal operator, there is a non-centered variant defined by 
    $$\widetilde{M}f(x)=\sup_{B\ni x }\frac{1}{|B|}\int_{B}|f(y)|dy,\quad x\in S,$$
    where the supremum is taken over all balls containing $x$. In contrast to the doubling situation, when the manifold experiences exponential volume growth, this is not comparable to the centered one, necessitating a different investigation. In the context of Riemannian symmetric spaces of non-compact type, Ionescu \cite{Ior1, Iorh} studied the unweighted boundedness of $\widetilde{M}$ which turned out to be of a different flavor than that of the centered maximal operator. In the context of Harmonic $NA$ groups, weighted boundedness of $\widetilde{M}$ has been explored in \cite{GRS}.
    \item As we have mentioned in the introduction (see Remark \ref{introrem}, (3)), there are certain limitations that are stopping us from extending the results of this article to the setting of higher rank Riemannian symmetric spaces of non-compact type. It would be intriguing to explore the possibility of circumventing certain techniques used in this paper, in order to establish some weighted boundedness for both the centered and non-centered Hardy-Littlewood maximal operators.
\end{enumerate}

\section*{Acknowledgments}
PG is supported by supported by the University of Paderborn.  TR is supported by the FWO Odysseus 1 grant G.0H94.18N: Analysis and Partial Differential Equations, the Methusalem program of the Ghent University Special Research Fund (BOF), Project title: BOFMET2021000601. JS is supported by the INSPIRE faculty fellowship (Ref. no. DST/INSPIRE/04/2022/002544) from the Department of Science and Technology, Government of India.
	% %%%%%%%%%%%%%%%%%%%%%%%%%%%%%%%%%%%%%%%%%%%%%%%%%%%%%%
	% \begin{thebibliography}{99}
	% 	%%%%%%%%%%%%%%%%%%%%%%%%%%%%%%%%%%%%%%%%%%%%%%%%%%%%%%	
	% \end{thebibliography}
	%\bibliographystyle{plain}
\bibliography{Reference.bib}
\bibliographystyle{alphaurl}
\end{document}